\documentclass[11pt,a4paper]{article}
\usepackage{amssymb,comment,amsmath,amsthm}
\usepackage{graphicx}
\usepackage{color}
\usepackage{titling,titlesec}
\usepackage{url}
\usepackage{amsthm,lipsum}
\usepackage{geometry}
\usepackage{tikz}
\usepackage{hyperref}
\titleformat{\subsection}[runin]
{\normalfont\bfseries}{}{0em}{}[.]
\newcounter{foo}

\newfont{\blb}{msbm10 scaled\magstep1}
\newfont{\comp}{cmr12 scaled\magstep1}
\newfont{\compb}{cmr10 scaled\magstep2}
\newfont{\sbb}{cmssbx10 scaled\magstep3}
\newfont{\sbbb}{cmssbx10 scaled\magstep5}
\newfont{\sbs}{cmssbx10 scaled\magstep1}
\newtheorem{theorem}{Theorem}
\newtheorem{lemma}[subsection]{Lemma}

\newtheorem{corollary}[subsection]{Corollary}
\newtheorem{proposition}{Proposition}
\newtheorem{definition}[foo]{Definition}

\newtheorem{example}{Example}

\newcommand{\im}{\mathrm{Im}}

\newcommand{\vep}{\varepsilon}
\newcommand{\Supp}{\mathrm{Supp}}
\allowdisplaybreaks
\title{A General Inequality for Walks in Graphs}
\author{
Chase Wilson\thanks{Department of Mathematics, University of California, San Diego, CA, 92093-0112 USA. E-mail: c7wilson@ucsd.edu}}
\begin{document}
\date{}
\setlength{\droptitle}{-2.5cm}
\maketitle
\vspace{-0.5in}

\begin{abstract}
Let $G$ be a graph and $w_k(G)$ denote the number of walks in $G$ of length $k$. For sequences $a_1, \cdots, a_n$ and $b_1, \cdots, b_n$ of non-negative integers such that $a_1 + \cdots + a_n = b_1 + \cdots + b_n$, we determine a simple necessary and sufficient condition on $a_1, \cdots, a_n, b_1, \cdots, b_n$ for the inequality
\[
w_{a_1}(G) \cdots w_{a_n}(G) \geq w_{b_1}(G) \cdots w_{b_n}(G)
\]
to hold for any graph $G$. 
\end{abstract}
\begin{section}{Introduction}
Let $G$ be a graph. A \textit{walk} on $G$ is a sequence of vertices $v_1, \cdots, v_n$ such that $\{v_i, v_{i + 1} \} \in E(G)$ for all $1 \leq i \leq n - 1$. We denote by $w_n(G)$ the number of walks on $G$ of length of $n$. For our purposes, the length of a walk is the number of edges it contains.

\medskip

The study of counting walks in graphs has a long history. In 1965, Blakely and Roy \cite{BlR} proved that $w_n(G) w_0(G)^{n - 1} \geq w_1(G)^n$ for all $n \geq 1$. Erd\H{o}s and Simonovits \cite{ES} proved that $w_k(G)^t \geq w_t(G)^k w_0(G)^{t - k}$ when both $k$ and $t$ are even and $k \geq t$. They cojectured that the same result holds when both $k$ and $t$ are odd. This was recently proved by Saglam, and later Raymond and Blekherman \cite{BR} gave a substantially simpler proof. Further inequalities of this type have also been studied. Lagarius \cite{LM} showed that $w_{2a + b}(G) w_b(G) \leq w_0(G) w_{2a + 2b}(G)$ Dress and Gutman \cite{DG} showed that $w_{a + b}(G)^2 \leq w_{2a}(G) w_{2b}(G)$. Hemecke, Kosub, et al. \cite{HK} generalized these two results by proving the so called sandwhich theorem which states $w_{2a + c} (G)w_{2(a + b) + c}(G) \leq w_{2a}(G) w_{2(a + b + c)}(G)$. They also proved $w_{2 \ell + p}(G)^k \leq w_{2 \ell}(G)^{k - 1} w_{2 \ell + pk}(G)$. Raymond and Blekherman \cite{BR2}, unknown to us during our research, used Tropical Geometry to give a list of inequalties and show that any true inequality of the above type can be deduced from one on their list.

\medskip

We prove the following.

\begin{definition} Let $a_1, \cdots, a_n, b_1, \cdots, b_n$ be non-negative integers such that $a_1 + \cdots + a_n = b_1 + \cdots + b_n$. Let $b^o_1 \geq \cdots \geq b^o_q$ be the odd integers (counted with multiplicity) among $b_1, \cdots, b_n$ and $b^e_1 \geq \cdots \geq b^e_{n - q}$ be the even integers (counted with multiplicity) among $b_1, \cdots, b_n$. We say that $(a_1, \cdots, a_n)$ \textit{majorizes} $(b_1, \cdots, b_n)$ \textit{with parity} if the following holds. For all $i \in [0, q]$ and $j \in [0, n - q]$ there are at least $(i + j)$ elements from $a_1, \cdots, a_n$ that are either even or greater than or equal to $b^o_i$ and, letting $a'_1, \cdots, a'_{i + j}$ be the $(i + j)$ largest such elements (counted with mulitplicity),
\[
\left ( \sum_{k = 1}^i b^o_k \right ) + \left ( \sum_{k = 1}^j b^e_k \right ) \leq \sum_{k = 1}^{i + j} a'_{k}
\]
\end{definition}
\begin{theorem} \label{main}
Let $a_1, \cdots, a_n, b_1, \cdots, b_n$ be non-negative integers such that $a_1 +\cdots + a_n = b_1 +\cdots + b_n$. Then the inequality $w_{a_1}(G) \cdots w_{a_n}(G) \geq w_{b_1}(G) \cdots w_{b_n}(G)$ holds for any graph $G$ iff $(a_1, \cdots, a_n)$ majorizes $(b_1, \cdots, b_n)$ with parity.
\end{theorem}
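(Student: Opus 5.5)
Both directions of Theorem~\ref{main} will run through the spectral formula. If $A$ is the adjacency matrix of $G$ with eigenvalues $\lambda_1\ge\cdots\ge\lambda_N$ and orthonormal eigenvectors $u_r$, set $c_r=(\mathbf 1\cdot u_r)^2\ge 0$. Then $w_k(G)=\sum_r c_r\lambda_r^k$. For even $k$ this is a moment of the nonnegative measure $\mu=\sum_r c_r\delta_{|\lambda_r|}$. For odd $k$ it is $\sum_r c_r\lambda_r^k\le\int t^k\,d\mu$, and equality holds when $G$ is bipartite.

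\textbf{Necessity.} Suppose the condition fails for some $(i,j)$. I will choose a disjoint union of graphs (on which $w_k$ is additive) that makes the inequality false. Let $\beta=b^o_i$. Take $G=H_1\cup H_2$ with $H_2$ a large complete graph $K_m$, which contributes about $m^{k+1}$ to every $w_k$. Take $H_1$ a graph whose odd walks are suppressed relative to its even walks beyond a threshold; a natural choice is a large complete bipartite graph $K_{s,s}$ combined with a star whose parameters are tuned. The goal is that, in the asymptotic regime $m\ll s\to\infty$ with suitable exponents, $\log w_k$ behaves like a tropical piecewise-linear function $\max(\alpha k, \gamma k+\delta\,[k\text{ even or }k\ge\beta])$. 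The failing inequality from the definition then shows that the leading exponent of $\prod w_{b_l}$ exceeds that of $\prod w_{a_l}$. I expect to reduce this to a two-component max-plus computation: the terms counted by the definition, namely even $a$'s or $a\ge\beta$, are exactly those that can "see" the big component. A cleaner way to realize the threshold is to use $K_{s,s}$ with a pendant clique or an odd cycle of controlled size, so that odd walks of length $<\beta$ stay small.

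\textbf{Sufficiency.} I will show that majorization with parity implies the inequality by decomposing the majorization into elementary moves, each of which is one of the known inequalities. These are Hemecke--Kosub's sandwich theorem $w_{2a+c}w_{2(a+b)+c}\le w_{2a}w_{2(a+b+c)}$, Dress--Gutman $w_{a+b}^2\le w_{2a}w_{2b}$, and the Erd\H{o}s--Simonovits type inequalities for odd lengths together with log-convexity of even moments. Concretely, I will argue by induction on $\sum_l b_l^2$, or on the number of positions where the sorted sequences differ. Given $a$ majorizing $b$ with parity, I will find a Robin Hood transfer $b\to b'$ such that three things hold: $b'$ is still majorized with parity by $a$, the step $w(b)\le w(b')$ is an instance of a known two-term inequality, and $b'$ is strictly closer to $a$. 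The transfer moves $2$ units, or $1$ unit combining an odd and an even entry, from a smaller $b$ to a larger one. A pair of odd entries may only be spread if the larger target is allowed by the threshold condition. Odd values are the delicate case: $w_{\text{odd}}$ is not a moment of $\mu$, so the only valid moves involving odd indices are those of the form $w_{2a+c}w_{2(a+b)+c}\le w_{2a}w_{2(a+b+c)}$, which push odd mass toward even values or to larger odd values.

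The main obstacle is the combinatorial lemma: whenever $a\neq b$ and $a$ majorizes $b$ with parity, some legal elementary move exists that preserves the relation. I will handle it by choosing the smallest index $(i,j)$ at which the inequality in the definition is tight or violated after sorting. From it I will read off which odd $b^o_i$ can be raised, or which pair of $b$'s can be merged into an even/odd pair matching the sandwich inequality. Then I will check the remaining constraints by a case analysis on the parities of the two entries moved.
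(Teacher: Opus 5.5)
\textbf{The sufficiency argument has a fatal gap.} Your induction only ever applies two-term moves, and no nontrivial valid two-term inequality has an odd length on its larger side.

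To see this, read the criterion with $b^o_0=+\infty$, as the paper does through $\gamma(0)=0$. Suppose $w_xw_y\ge w_uw_v$ with $x+y=u+v$ holds for all graphs and $\{x,y\}\ne\{u,v\}$. Then $x$ and $y$ are both even:
\begin{itemize}
\item If $u,v$ are even, the $(0,2)$ condition needs two even entries on the left.
\item If $u\ge v$ are odd and $x,y$ are odd, the $(2,0)$ condition gives $\min\{x,y\}\ge v$, which forces $\{x,y\}=\{u,v\}$.
\item In the mixed case, the $(0,1)$ and $(1,1)$ conditions give $y\ge v$ for the even entry and $x\ge u$ for the odd one, again forcing equality.
\end{itemize}
For instance, $w_3w_0\ge w_2w_1$ fails on a star $K_{1,s}$ together with a $d$-regular graph on $s$ vertices, $d\approx s^{0.4}$. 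There $w_3w_0\asymp s^{3.2}$ while $w_2w_1\asymp s^{3.4}$.

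Hence, in logarithmic coordinates, every product, root or chain of two-term inequalities has nonpositive exponent on each odd $w_k$ of the larger side. The Blakley--Roy inequality $w_3w_0^2\ge w_1^3$ is an instance of the theorem, since $(3,0,0)$ majorizes $(1,1,1)$ with parity, yet it can never be reached this way.

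Your combinatorial lemma fails on exactly this pair. From $b=(1,1,1)$ the only legal move is $(1,1)\to(2,0)$. But $(3,0,0)$ does not majorize $(2,0,1)$ with parity: the even entry $2$ can only be dominated by the even entries $0,0$ of $a$. Working downward from $a$ instead, the odd entry $3$ can never be moved at all. Listing Erd\H{o}s--Simonovits among your tools does not help, because your inductive step never invokes a multi-term inequality.

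\textbf{What is missing, and the necessity side.} You need genuinely multi-term generators with odd lengths on the large side, plus a way to combine them. The paper proves $w_{a_1}\cdots w_{a_n}\ge w_b^n$ whenever $\sum a_i=nb$ and either $b$ is odd and every $a_i<b$ is even, or every $a_i$ is even (Lemma~\ref{simple}). The proof is by entropy: walks of lengths $a_i$ in $P_b$ are arranged into copies of $P_b$ and its reverse, and the uniform measure on length-$b$ walks is pulled back along them. The general inequality then follows by averaging. One needs a doubly stochastic $M$ with $Ma=b$ and $M_{j,i}=0$ whenever $a_i$ is odd and either $b_j$ is even or $b_j>a_i$; rational instances of the lemma are then multiplied row by row. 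The real combinatorial work is the greedy construction showing such an allocation exists under majorization with parity.

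Your necessity direction is also only a plan: no graph is fixed, and the suggested ingredients lack the required profile.
\begin{itemize}
\item $K_{s,s}$ has $w_k=2s^{k+1}$ for every $k$, so it is parity-blind.
\item A star suppresses all odd lengths uniformly, not only those below $\beta=b^o_{i_0}$.
\item Pendant cliques or odd cycles add odd walks rather than remove them.
\end{itemize}
The paper instead uses a bipartite layered graph whose two heavy end layers, each of $M^3$ pendant vertices, sit at odd distance exactly $\beta$. The middle layers have size $M$, so on average each step contributes a factor $M$. This gives $w_k\asymp M^{3+k}$ for $k$ even or $k\ge\beta$, but only $M^{2+k}$ for odd $k<\beta$. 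It is joined to an $M^{1-\varepsilon}$-regular graph on $M^{3+A'\varepsilon}$ vertices, which dominates below the threshold $A'$. Together these make the exponent of $\prod w_{b_l}$ exceed that of $\prod w_{a_l}$ by $\varepsilon$ times the violation in the definition.
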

The main advantages this has over the Raymond Blekherman \cite{BR2} result, is that first, our proof is significantly shorter and purely combinatorial, and second, given a proposed inequality, our result gives a simple way to check if it is true or not. In order to get the concise description of true inequalities, we do sacrifice a slight bit generality compared to the Raymond Blekherman result as their generating inequalities are not restricted to ones where $a_1 + \cdots + a_n = b_1 + \cdots + b_n$. We also note that translating from a generating to set of inequalities to our consise description is non-trivial. Indeed, our approach also starts by giving a (different) list of generating inequalities but the bulk of this paper is spent showing that it is possible to derive an inequality of the form $w_{a_1} \cdots w_{a_n} \geq w_{b_1} \cdots w_{b_n}$ from our generating set of inequalities precisely when $a_1, \cdots, a_n$ majorizes $b_1, \cdots, b_n$ with parity.

\medskip

Theorem \ref{main} holds for mult-graphs as well as graphs. By considering the adjacency matrix of $G$, and using rational approximation we can translate this into the language of symmetric matrices.
\begin{corollary} Let $a_1, \cdots, a_n, b_1, \cdots, b_n$ be non-negative integers such that $a_1 + \cdots + a_n = b_1 +\cdots + b_n$. Then
\[
\prod_{i = 1}^n \langle A^{a_i} \boldsymbol{1}, \boldsymbol{1} \rangle \geq \prod_{i = 1}^n \langle A^{b_i} \boldsymbol{1}, \boldsymbol{1} \rangle
\]
holds for any symmetric matrix $A$ with non-negative entries iff $(a_1, \cdots, a_n)$ majorizes $(b_1, \cdots, b_n)$ with parity. 
\end{corollary}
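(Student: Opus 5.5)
The corollary follows from Theorem \ref{main} in two directions. Throughout, write $A^{a}$ for the matrix power and note $w_k(G)=\langle A_G^k\boldsymbol{1},\boldsymbol{1}\rangle$, where $A_G$ is the adjacency matrix of the multigraph $G$. Here loops are allowed, with a loop counted once on the diagonal; we use the remark that Theorem \ref{main} holds for multigraphs.

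\emph{Necessity.} Suppose the matrix inequality holds for every symmetric non-negative $A$. It then holds in particular for adjacency matrices of simple graphs. Theorem \ref{main} (the ``only if'' part) then gives that $(a_1,\dots,a_n)$ majorizes $(b_1,\dots,b_n)$ with parity.

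\emph{Sufficiency.} Assume the majorization with parity holds. We reduce the claim for a general matrix to the multigraph case in three steps.

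\begin{enumerate}
\item Suppose $A$ has non-negative integer entries and is symmetric. Then $A$ is the adjacency matrix of a multigraph, so the inequality holds by Theorem \ref{main}.
\item Suppose $A$ has non-negative rational entries. Choose an integer $N>0$ with $NA$ integral. Each factor scales homogeneously: $\langle (NA)^{k}\boldsymbol{1},\boldsymbol{1}\rangle=N^{k}\langle A^{k}\boldsymbol{1},\boldsymbol{1}\rangle$. Hence the two sides of the inequality for $NA$ are $N^{\sum a_i}$ and $N^{\sum b_i}$ times the corresponding sides for $A$. Since $\sum a_i=\sum b_i$, these powers are equal and cancel, so the inequality for $A$ follows from step 1. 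This is exactly where the hypothesis $\sum a_i=\sum b_i$ is needed.
\item For a general non-negative symmetric $A$, approximate it entrywise by non-negative symmetric rational matrices $A_m\to A$. Take $(A_m)_{ij}=(A_m)_{ji}$ rational and non-negative, so that zero entries may stay zero. Each product $\prod_i\langle A^{a_i}\boldsymbol{1},\boldsymbol{1}\rangle$ is a polynomial in the entries of $A$, hence continuous. Non-strict inequalities are preserved under limits, so the inequality for $A$ follows from step 2.
\end{enumerate}

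None of these steps is a real obstacle. The only points needing care are that the approximating matrices keep both symmetry and non-negativity, and that the scaling argument uses the equality of total degrees.
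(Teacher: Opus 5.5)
Your proposal is correct and follows the paper's own argument: the only-if direction comes from adjacency matrices of simple graphs, and the if direction goes from multigraphs (integer symmetric matrices) to rational matrices via $\langle (NA)^k\boldsymbol{1},\boldsymbol{1}\rangle = N^k\langle A^k\boldsymbol{1},\boldsymbol{1}\rangle$ with $\sum a_i=\sum b_i$, then to real matrices by continuity. The one point to tighten is that nonzero diagonal entries require loops, which the paper's remark about multigraphs does not explicitly cover; you can avoid loops by applying the integer case to the loopless matrix $\begin{pmatrix} 0 & A \\ A & 0\end{pmatrix}$, whose walk counts are $2\langle A^k\boldsymbol{1},\boldsymbol{1}\rangle$, so the factor $2^n$ cancels from both sides.
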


\begin{subsection}{Organization} In Section \ref{sec2}, we use entropy techniques inspired by Raymond and Blekherman \cite{BR} to prove the if direction in Theorem \ref{main} in the special case that $b_1 = b_2 = \cdots = b_n$. In Section \ref{sec3}, we prove the full if direction of Theorem \ref{main} by averaging many instances of the special case where $b_1 = b_2 = \cdots = b_n$. In Section \ref{sec4}, we prove the only if direction of Theorem \ref{main} by, given a pair of sequences $(a_1, \cdots, a_n)$, $(b_1, \cdots, b_n)$ for which $(a_1, \cdots, a_n)$ does not majorize $(b_1, \cdots, b_n)$ with parity, constructing a graph $G$ in which $w_{a_1}(G) \cdots w_{a_n}(G) < w_{b_1}(G) \cdots w_{b_n}(G)$. 

\end{subsection}
\begin{section}{Notation and Entropy} \label{sec2}

\begin{subsection}{Notation}
For a matrix $M$, we let $M_i$ denote the $i$th row of $M$. We let $\bf{1}$ be the all $1$ vector. For vectors $v, w$, we say $v \leq w$ if $v$ is less than or equal to $w$ pointwise. For a graph $G$ and non-negative integer $k$, we let $w_k(G)$ denote the number of walks of length $k$ in $G$. When the graph $G$ is obvious from context we just write $w_k$.For a vector $v$, we let $\dim(v)$ be the dimension of $v$. 

\medskip

We say that $f(x) \ll g(x)$ if $f = o(g)$. 
\end{subsection}
\begin{subsection}{Entropy} We use entropy in section \ref{sec_p1}. Let us recall the basics of entropy.
If $S$ is a finite set and $X, Y$ are $S$-valued random variable, then the {\em entropy of $X$} is given by
\[
	H[X] = -\sum_{y \in \im(X)} P(X = y) \log ( P(X = y) )
\]
and the {\em conditional entropy} $H[X | Y]$ is given by
\[
	H[X | Y] = E_{y \sim Y} [ H[(X | Y = y)]]
\]
The two main facts of entropy that we will make use of are as follows:
\begin{proposition}
Let $X,X_1,X_2,\dots,X_k$ be random variables where $X$ has values in $S$ and $X_i$ has values in $S_i$ for $1 \leq i \leq k$.
\begin{center}
\begin{tabular}{lp{5in}}
$1$. & {\rm [Uniform bound]} $H[X] \leq \log( |\im(X)| )$ with equality iff $X$ has uniform distribution.\\
$2$. & {\rm [The Chain Rule]} $(X_1,X_2, \cdots, X_k)$ is an $S_1 \times \cdots \times S_k$ random variable and $H[X_1, \cdots, X_k] = H[X_1] + H[X_2 | X_1] + \cdots + H[X_k | X_1, \cdots, X_{k - 1} ]$.
\end{tabular}
\end{center}
\end{proposition}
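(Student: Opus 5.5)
The plan is to prove the two facts separately, directly from the definitions of $H[X]$ and $H[X \mid Y]$, using only Jensen's inequality for the concave function $\log$ and some bookkeeping with joint probabilities.

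\emph{Uniform bound.} Write $p_y = P(X = y)$ for $y \in \im(X)$, so that $p_y > 0$ and $\sum_y p_y = 1$. Then
\[
H[X] = \sum_{y \in \im(X)} p_y \log \frac{1}{p_y} \leq \log \Big( \sum_{y \in \im(X)} p_y \cdot \frac{1}{p_y} \Big) = \log |\im(X)|,
\]
by Jensen's inequality applied to the concave function $\log$ with weights $p_y$. Since $\log$ is strictly concave, equality holds iff all the values $1/p_y$ with positive weight coincide. That is, equality holds iff $p_y = 1/|\im(X)|$ for every $y$, which means $X$ is uniform on its image. The converse is checked by direct substitution.

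\emph{Chain rule.} First, the tuple $(X_1,\dots,X_k)$ is a random variable with values in the finite set $S_1 \times \cdots \times S_k$ simply because each coordinate is. I will establish the two-variable identity $H[X,Y] = H[X] + H[Y \mid X]$ and then induct on $k$.

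For the two-variable case, write $p(x,y) = P(X=x, Y=y)$, $p(x) = P(X = x)$ and $p(y\mid x) = p(x,y)/p(x)$. Then $\log p(x,y) = \log p(x) + \log p(y \mid x)$. Multiplying by $-p(x,y)$ and summing over the support gives
\[
H[X,Y] = -\sum_x p(x)\log p(x) \;-\; \sum_x p(x) \sum_y p(y\mid x)\log p(y\mid x).
\]
Here I used $\sum_y p(x,y) = p(x)$ in the first term. By definition, the first term is $H[X]$, and the second is $E_{x\sim X} H[Y \mid X = x] = H[Y\mid X]$.

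For the induction, apply the two-variable identity with $X = (X_1,\dots,X_{k-1})$ and $Y = X_k$. This gives
\[
H[X_1,\dots,X_k] = H[X_1,\dots,X_{k-1}] + H[X_k \mid X_1,\dots,X_{k-1}],
\]
and the inductive hypothesis expands the first term on the right.

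The only point needing care is the equality case of Jensen's inequality. This requires strict concavity together with restricting the sums to $\im(X)$, so that every weight $p_y$ is strictly positive. The remaining steps are routine manipulations of the definitions.
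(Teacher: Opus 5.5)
Your proof is correct. The uniform bound follows from Jensen's inequality for the strictly concave $\log$ with strictly positive weights $p_y$, which also gives the equality case. The chain rule follows from the factorization $p(x,y)=p(x)\,p(y\mid x)$ and induction on $k$, conditioning on the tuple $(X_1,\dots,X_{k-1})$, and this matches the paper's definition $H[X\mid Y]=E_{y\sim Y}H[X\mid Y=y]$.

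The paper gives no proof of this proposition. It recalls the two facts as standard background before using them in the proof of Lemma~\ref{simple}, so your argument is the standard textbook proof of what the paper takes for granted.
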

\end{subsection}
\end{section}
\end{section}

\begin{section}{A Simpler Inequality} \label{sec_p1}
In this section we prove the following Lemma which will be used to prove Theorem \ref{main}..
\begin{lemma} \label{simple} Let $a_1, \cdots, a_n, b$ be integers such that $a_1 + \cdots + a_n = bn$, then
\[
w_{a_1} \cdots w_{a_n} \geq w_b^n
\]
for any graph $G$ if either
\begin{itemize}
\item $b$ is odd and $a_i$ is even whenever $a_i < b$.
\item $b$ is even and $a_i$ is even for all $i$.
\end{itemize}
\end{lemma}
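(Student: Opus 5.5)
The plan is an entropy argument in the style of Raymond and Blekherman \cite{BR}. Let $W=(v_0,\dots,v_b)$ be a uniformly random walk of length $b$, so $H[W]=\log w_b$. Since $\sum a_i = nb$, it suffices to construct, for each $i$, a random walk $X_i$ of length $a_i$ such that
\[
\sum_{i=1}^n H[X_i] \;\ge\; n H[W].
\]
The uniform bound then gives $\sum_i \log w_{a_i} \ge n\log w_b$, which is the claim.

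\textbf{Step 1 (Markov structure of $W$).} Conditioned on $v_k=x$, the walk $W$ is uniform over pairs consisting of a walk of length $k$ ending at $x$ and a walk of length $b-k$ starting at $x$. Hence the prefix $(v_0,\dots,v_k)$ and the suffix $(v_k,\dots,v_b)$ are conditionally independent given $v_k$. By the chain rule, $H[W]=H[v_0..v_k]+H[v_k..v_b]-H[v_k]$. The same structure shows that reversing a segment and gluing two independent copies at a common vertex $v_k$ produces a valid walk, and the entropy of the glued walk equals the sum of the pieces' entropies minus $H[v_k]$. Reversal is legitimate because $G$ is undirected.

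\textbf{Step 2 (building blocks).} Two gluing patterns produce the $X_i$:
\begin{itemize}
\item For an even $a_i = 2m$ (with $m\le b$), go out along $(v_0,\dots,v_m)$ and come back along an independent copy of the same prefix, conditioned on the same $v_m$. This walk has entropy $2H[v_0..v_m]-H[v_m]$, and it starts and ends at a vertex distributed like $v_0$.
\item For a long $a_i \ge b$, concatenate copies of $W$, alternately forward and reversed, glued at endpoints $v_b$ or $v_0$. End with a partial segment of $W$ or of its reversal, and finally a back-and-forth even excursion.
\end{itemize}
The parity hypotheses are exactly what makes the endpoint distributions match at every gluing. When $b$ is odd, every reversal of $W$ swaps the roles of $v_0$ and $v_b$. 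So an odd-length piece can only be built from full copies of $W$ plus even excursions, which forces odd $a_i$ to satisfy $a_i\ge b$. When $b$ is even, the endpoints of the building blocks are always at positions of the same parity, so all $a_i$ must be even.

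\textbf{Step 3 (accounting, the main obstacle).} Each $H[X_i]$ is a sum of terms $H[\text{segment}]$ minus terms $H[v_k]$ at the glue points. The difficulty is choosing the cut points so that, summed over $i$, these terms reassemble into at least $n H[W]$. The subtracted vertex entropies have to be paired off against the added ones, using the identity from Step 1 and the bound $H[v_k]\le H[v_0..v_k]$ for any segment.

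I would first try a ``total length $nb$'' bookkeeping. Concatenate $n$ copies of $W$ into a single long alternating forward/reversed walk of length $nb$. Then cut it into pieces of lengths $a_1,\dots,a_n$. Each cut that falls inside a copy of $W$ is repaired by an even out-and-back excursion, which mirrors the missing half. Each such excursion replaces $H[v_0..v_k]+H[v_k..v_b]-H[v_k]$ by expressions of the same form, and the gains and losses should telescope.

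If direct telescoping fails, the fallback is a convexity or induction argument. This means proving the two-term cases $w_{b-s}w_{b+s}\ge w_b^2$ (with $b-s$ even when $b$ is odd) and $w_{2m}\cdots$ for even $b$ by Step 2, and then deriving the general case by repeatedly replacing pairs $(a_i,a_j)$ with ones closer to $b$.
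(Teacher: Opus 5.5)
Your Steps 1--2 set up the same entropy framework as the paper. Gluing conditionally independent segments of the uniform walk $W$ is exactly the paper's pullback of $W$ along a walk $p_i$ in the path $P_b$. It gives $H[X_i]$ as $H[v_{p_i(0)}]$ plus one conditional entropy $H[v_{j'}\mid v_j]$ for each step $j\to j'$ of $p_i$. But Step 3, which you flag as the main obstacle, is the whole content of the lemma, and the proposal leaves it open.

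What is needed is an exact combinatorial matching. You need walks $p_1,\dots,p_n$ in $P_b$ of lengths $a_1,\dots,a_n$, each starting at $0$ or $b$, whose directed edges (with multiplicity) coincide with those of $n$ copies of $P_b$, each traversed forward or backward. Then, using $H[v_0]=H[v_b]$ (reversal invariance of $W$), the sum is exactly $nH[W]$. Without an exact match you are left with differences of edge terms at different positions, which have no sign.

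Your recipe does not produce such a match. Take $b=3$, $a=(2,2,5)$. Reassembling exactly into three copies of $P_3$ forces the short walks to be $(0,1,0)$ and $(3,2,3)$. It then forces the long walk to be $(0,1,2,1,2,3)$, with its excursion on the \emph{middle} edge. If you keep those short walks and append the excursion at the end, $(0,1,2,3,2,3)$, you are left with the defect $\bigl[H(v_3\mid v_2)+H(v_2\mid v_3)\bigr]-\bigl[H(v_2\mid v_1)+H(v_1\mid v_2)\bigr]$. This is negative, e.g.\ for a spider with eight legs of length $2$. Likewise, cutting the length-$nb$ zigzag produces pieces rooted at interior vertices $k$, and $H[v_k]$ can be smaller than $H[v_0]$. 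The promised ``repair'' by excursions would change either the lengths or the edge multiset, and you do not say how to avoid this.

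The paper supplies exactly this missing step in Lemma \ref{lemma_2}, together with Lemma \ref{decompose_lemma}. The proof is an induction that carries a pool of length-$2$ back-and-forth excursions $q_j$ recording deficits:
\begin{itemize}
\item An odd $a_i\ge b$ becomes one traversal of $P_b$ that absorbs $(a_i-b)/2$ of these excursions, wherever they sit.
\item Two even $a_i$ are paired as a closed walk at $0$ and a closed walk at $b$ that together cover a forward and a backward copy. Either they leave a middle gap, which creates new excursions, or they absorb excursions at arbitrary prescribed positions via the greedy construction of Lemma \ref{decompose_lemma}.
\item Squaring handles an odd number of even $a_i$.
\end{itemize}

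Your fallback does not bypass this. For $b=3$, $a=(0,4,5)$, no pair sums to $2b$. So you would need two-term inequalities with unequal right-hand sides, such as $w_0w_5\ge w_2w_3$. These are instances of the main theorem, not of the cases $w_{b-s}w_{b+s}\ge w_b^2$ you propose to prove. Moreover, careless smoothing creates forbidden short odd lengths, e.g.\ $(0,4)\to(3,1)$, so that route would need its own careful argument.
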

Note this is precisely the if direction in Theorem \ref{main} in the special case where $b_1 = \cdots = b_n$. We will focus on proving this inequality when the first bullet holds since the second will follow from the exact same argument.

\begin{definition} We say walks $p_1, \cdots p_n$ in a graph $G$ can be arranged into the walks $q_1,\cdots, q_k$ if for every ordered pair $(u, v)$ of adjacent vertices in $V(G)$, $(u, v)$ is crossed (with this orientation) the same number of times in $p_1, \cdots, p_n$ as in $q_1, \cdots, q_k$.
\end{definition}

In order to use entropy, we aim to show that given a walk $W$ of length $b$, there exists $n$ walks $\rho_1, \cdots, \rho_n$ of length $a_1,\cdots, a_n$ that can be arranged into $n$ walks $\gamma_1, \cdots, \gamma_n$ each of length $b$ such that for all $i$, $\gamma_i$ is either $W$ or the the reverse of $W$ and the start and endpoint of $\rho_i$ are endpoints of $W$.

\[
\begin{tikzpicture}
\tikzset{
  every arrow/.style={line width=1.5pt},
  >=stealth
}
\node[circle] (A) at (0, 0){$v_0$};
\node[ circle] (B) at (2, 0) {$v_1$};
\node[ circle] (C) at (4, 0) {$v_2$};
\node[ circle] (D) at (6, 0) {$v_3$};

\node[align = flush center, text width = 8cm] (Label) at (3, -2)
{
A walk of length $2$ starting at $v_0$ (red) and a walk of length $4$ starting at $v_3$ (blue) arranged into two paths of length $3$ (top path and bottom path)
};

\path[red, ->]  (A) edge[out = 60, in = 120] (B);
\path[red, ->]  (B) edge[out = -120, in = -60] (A);

\path[blue, ->]  (D) edge[out = -120, in = -60] (C);
\path[blue, ->]  (C) edge[out = -120, in = -60] (B);

\path[blue, ->]  (B) edge[out = 60, in = 120] (C);
\path[blue, ->]  (C) edge[out = 60, in = 120] (D);

\end{tikzpicture}
\]
\begin{lemma}\label{decompose_lemma} Let $P_b = (0, \cdots, b)$ be the path of length $b$ and let $q_1, \cdots, q_{r}$ be paths in $P_b$ of the form $q_i = (u_i, w_i, u_i)$ for some vertices $w_i, u_i \in P_b$. Then if $s, t$ are positive integers such that $s + t = b + r$, there exists paths $p, \gamma$ such that
\begin{itemize}
\item $p$ has length $2s$, $\gamma$ has length $2t$.
\item $p$ and $\gamma$ can be arranged into $(0, \cdots, b), (b, \cdots, 0), q_1, \cdots, q_r$.
\item $p$ starts and ends on $0$ and $\gamma$ starts and ends on $b$
\end{itemize}
\end{lemma}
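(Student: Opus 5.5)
The plan is to reduce the statement to a simple counting and splitting argument on a multigraph. All of $(0,\dots,b)$, $(b,\dots,0)$, $q_1,\dots,q_r$ live on the path $P_b$. Write $e_j=\{j-1,j\}$ for $1\le j\le b$. Each $q_i=(u_i,w_i,u_i)$ crosses a single edge $e_j$ once in each direction, so the target collection crosses each $e_j$ exactly $m_j$ times in each direction, where
\[
m_j = 1 + \#\{ i : \{u_i,w_i\} = e_j \}.
\]
We have $m_j\ge 1$ and $\sum_j m_j = b+r = s+t$. Think of $e_j$ as carrying $m_j$ ``copies'', each copy representing one crossing in each direction. It then suffices to produce a closed walk $p$ at $0$ and a closed walk $\gamma$ at $b$ such that $p$ crosses $e_j$ exactly $c_j$ times in each direction and $\gamma$ crosses it exactly $d_j$ times in each direction, with $c_j+d_j=m_j$, $\sum_j c_j=s$ and $\sum_j d_j = t$. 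The lengths are then automatically $2s$ and $2t$, and the ``can be arranged into'' condition holds because the oriented crossing counts match edge by edge.

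For the split, list all $b+r$ copies in order: the $m_1$ copies of $e_1$, then the $m_2$ copies of $e_2$, and so on up to $e_b$. Give the first $s$ copies to $p$ and the remaining $t$ copies to $\gamma$. Since this is a cut of a linear order, there is an index $k$ with $c_j=m_j\ge1$ for $j<k$, $c_j=0$ for $j>k$, and $d_j=0$ for $j<k$, $d_j=m_j$ for $j>k$; the edge $e_k$ may be shared. Because $s\ge 1$, $p$ receives at least one copy of $e_1$, so its support is a nonempty set of consecutive edges $e_1,\dots,e_{k'}$ with every $c_j\ge1$ there. Symmetrically, $t\ge1$ means $\gamma$'s support is a nonempty consecutive block ending at $e_b$ with all $d_j\ge1$.

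Next, realize the walks. For $p$: walk $0\to1\to\cdots\to k'$ and back to $0$, which uses one copy of each $e_j$, $j\le k'$, in each direction. Whenever the walk stands at vertex $j-1$, insert $c_j-1$ extra back-and-forth excursions $(j-1,j,j-1)$. The result is a closed walk at $0$ crossing $e_j$ exactly $c_j$ times in each direction. For $\gamma$, do the same from $b$ down to the left end of its support. The main point needing care is the connectivity step: a closed walk from an endpoint realizing prescribed doubled multiplicities exists only if the support is a connected interval touching that endpoint with no gaps. This is exactly why the copies are cut in linear order (prefix versus suffix) rather than split arbitrarily, and why $s,t\ge1$ is needed.
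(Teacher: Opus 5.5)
Your proof is correct. It reaches the same left/right split as the paper, but by a cleaner mechanism.

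The paper sorts the $q_i$ by $u_i$ and builds $p$ greedily from $0$: it steps right, absorbs each $q_j$ when it reaches $u_j$, and turns back once the remaining length equals the current position. It builds $\gamma$ by the mirror-image greedy from $b$. It then has to show that these two independently built walks are compatible: the turning points coincide ($M_1=M_2$) and the absorbed $q$'s are complementary ($j_1=j_2+1$). It gets this from the length identity $s+t=M_1+(b-M_2)+(j_1-1)+(r-j_2)$ together with the bounds $u_{j_1-1}\le M_1\le u_{j_1}$ and $u_{j_2+1}\le M_2\le u_{j_2}$.

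You instead use the fact that ``can be arranged into'' only sees directed crossing counts, and reduce everything to the multiplicities $m_j\ge 1$ with $\sum_j m_j=b+r$. You then fix the split in advance as a prefix/suffix cut of the linearly ordered edge copies. Compatibility is then automatic. What remains is the elementary construction of a closed walk with prescribed doubled multiplicities on an interval containing the base endpoint, and this is exactly where $s,t\ge 1$ and $m_j\ge 1$ are used.

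The paper's construction gives slightly finer structure. Its two out-and-back cores glue exactly into $(0,\dots,b)$ and $(b,\dots,0)$, and each $q_i$ is used whole as an excursion; in yours, both cores may cross the shared edge $e_k$. That extra structure is not needed for the lemma. Your version avoids the greedy bookkeeping and the parity and termination details the paper leaves implicit, and it makes clear why the hypotheses are exactly what is required.
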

\begin{proof}
Let us assume that $q_1, \cdots, q_r$ are sorted by their start points so $u_1 \leq u_2 \leq \cdots \leq u_r$. We will start by constructing $p = (p_{0}, \cdots, p_{ 2s} )$ greedily. Conceptually, this means we walk along $P_b$, backtraking by $1$ to cover any $q_i$ we can, and then moving forward and finally returning to $0$ when we are out of length. Formally, we initiate $j = 1$ and do the following inductively: Suppose we have constructed $(p_{0}, \cdots, p_{ i - 1} )$. If $p_{ i - 1} = 2s - (i - 1)$, then let $(p_{i}, \cdots, p_{ 2s} ) = (2s - (i - 1) - 1, \cdots, 0)$ and let $M = 2s - (i - 1)$. If that is not the case and $p_{ i - 1} = u_j$, then let $(p_{i}, p_{ i + 1}) = (v_j, u_j)$ and increment $j$ by $1$. Otherwise let $p_{ i} = p_{ i - 1} + 1$.

\medskip

If $j_1$ is the maximum $j$ gets incremented to, then we can arrange $p$ into $q_1, \cdots, q_{j_1 - 1}, (0,\cdots, M_1 - 1, M_1, M_1 - 1, \cdots, 0)$ when $M_1$ is the maximum $p_i$ is at the beginning of one of the steps.

\medskip

We then do the same greedy strategy to construct $\gamma$ but in reverse. We initiate $j = r$ and again do the following inductively: Suppose we have constructed $(\gamma_{0}, \cdots, \gamma_{ i - 1} )$. If $\gamma_{ i - 1} = b - ( 2s - (i - 1))$, then let $(\gamma_{i}, \cdots, \gamma_{ 2s} ) = ( b- (2s - (i - 1)) + 1, \cdots, b)$. Otherwise if $\gamma_{ i - 1} = u_j$, then let $(\gamma_{i}, \gamma_{ i + 1}) = (v_j, u_j)$ and decrement $j$ by $1$. Otherwise let $\gamma_{ i} = \gamma_{ i - 1} - 1$.

\medskip

If $j_2$ is the minimum $j$ gets decremented to, we can arrange $\gamma$ into $q_{j_2 + 1}, \cdots, q_r, (b, \cdots, M_2 + 1, M_2, M_2 + 1, \cdots b)$.

\medskip

Since $(0, \cdots,M - 1, M, M - 1, \cdots, 0), (b, \cdots, M + 1, M, M + 1, \cdots b)$ can be arranged into $(0, \cdots, b), (b, \cdots, 0)$, it suffices to show that $M_1 = M_2$ and $j_1 = j_2 + 1$. We have
\[
b + r = s + t = M_1 + (b - M_2) + (j_1 - 1) + (r - j_2)
\]
which implies
\[
(M_1 - M_2) = (j_2 + 1 - j_1)
\]
We also have $u_{j_1 - 1} \leq M_1 \leq u_{j_1}$ and $u_{j_2 + 1} \leq M_2 \leq u_{j_2}$ (Here we set $u_0 = 0$ and $u_{r + 1} = b$). Since $u_i$ is a non-decreasing sequence, this implies that $M_1 = M_2$ and $j_2 + 1 = j_1$
\end{proof}
\begin{lemma} \label{lemma_2} Let $b$ be non-negative integer $a_1, \cdots, a_n, r, b$ are non-negative integers such that an even number of the $a_i$ are even, all $a_i$ with $a_i < b$ are even and $a_1 + \cdots + a_n + 2r = n b$. Then there exists paths $p_1, \cdots, p_n$, and $q_1, \cdots, q_r$ in $P_b = (0, \cdots, b)$ such that
\begin{enumerate}
\item $p_i$ has length $a_i$ for all $1 \leq i \leq n$.
\item $p_i$ starts and ends at an endpoint of $P_b$ for all $1 \leq i \leq n$
\item $q_i$ has length $2$ for all $i$
\item $q_i$ starts and ends at the same vertex for all $i$.
\item $p_1, \cdots, p_n, q_1, \cdots,q_r$ can be arranged into $P'_1, \cdots, P'_n$ where, for all $1 \leq i \leq n$, $P'_i$ is is either $(0,\cdots, b)$ or $(b, \cdots, 0)$.
\end{enumerate}
\end{lemma}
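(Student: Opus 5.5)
The plan is to treat every walk on $P_b$ as a multiset of oriented edges and do an exact accounting. Two kinds of piece have a simple edge content. The full traversals $(0,\dots,b)$ and $(b,\dots,0)$ are what the $P'_i$ must consist of. A \emph{cycle} $(u,u\pm1,u)$ covers one edge of $P_b$ once in each direction. Everything will be built so that the $p_i$ together with the $q_j$ (which are exactly such cycles) reassemble into $n$ full traversals. I read the parity hypothesis as guaranteeing that the walks of even length can be split into pairs. By property 2, each even-length $p_i$ starts and ends at the same endpoint. When $b$ is odd, each odd-length $p_i$ goes from $0$ to $b$ (or from $b$ to $0$), since $P_b$ is bipartite.

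\textbf{Step 1 (odd $a_i$).} Since $a_i\ge b$ and $a_i\equiv b \pmod 2$, write $a_i=b+2m_i$. Take $p_i$ to be the walk from $0$ to $b$ that inserts $m_i$ back-steps $(k{+}1,k,k{+}1)$ at chosen positions $k$. Its edge content is one forward traversal plus $m_i$ cycles, one on each edge $\{k,k+1\}$ where a back-step was inserted. The positions of these back-steps are left free for now.

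\textbf{Step 2 (even $a_i$, in pairs).} Pair each $a_i$ with some $a_j$ and write $a_i=2s$, $a_j=2t$. There are two cases.
\begin{itemize}
\item If $s+t\ge b$, apply Lemma \ref{decompose_lemma} with $s+t-b$ cycles. Since those $q$'s can be chosen freely, any such family is realized, and $p,\gamma$ together equal one forward traversal, one reverse traversal and those cycles.
\item If $s+t<b$, take $p=(0,\dots,s,\dots,0)$ and $\gamma=(b,\dots,b-t,\dots,b)$. Their union is one forward plus one reverse traversal with a \emph{gap}: each of the $b-s-t$ edges of $[s,b-t]$ is missing in both directions. That gap is exactly a family of $b-s-t$ cycles, one per edge.
\end{itemize}

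\textbf{Step 3 (balancing).} Let $D$ be the total gap length over the short pairs. Let $S$ be the total surplus, consisting of $\sum m_i$ plus $\sum (s+t-b)$ over the long pairs. The hypothesis $\sum a_i+2r=nb$ rewrites as $D=S+r$. Choose the back-step positions of Step 1 and the free cycles of Lemma \ref{decompose_lemma} in Step 2 to lie on edges inside the gaps. Then every surplus cycle fills one missing edge-pair, and the remaining $r$ gap edges are supplied by the $q_j=(u,u+1,u)$. After this, each odd $p_i$ contributes a full forward traversal and each even pair contributes one forward and one reverse traversal, giving $n$ traversals $P'_i$. Properties 1--5 can then be checked directly.

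\textbf{Main obstacle.} The difficulty is that surplus and deficit must match \emph{edge by edge}, not only in total. This requires at least one short pair to exist whenever $S>0$, and the gaps to be long enough to host all surplus cycles. A surplus cycle can sit on any edge of $P_b$, so edge position is not the real constraint; the real constraint is the count. The pairing of the even $a_i$ must be chosen to make $D\ge S$ achievable, and in principle all excess could even be pushed onto one pair. To secure this, I would first pair the even lengths greedily, largest with smallest, and then argue from $D=S+r\ge S$ that the gaps have enough room. If some configuration has no short pair, then $S=D-r\le 0$ forces $S=0$ and $r=0$. I expect the degenerate cases, such as $b$ even or no odd $a_i$, to follow by the same argument.
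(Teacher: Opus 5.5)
Your proof is correct, but it matches surplus against deficit differently from the paper.

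\textbf{The paper's route.} The paper inducts on $n$ and uses the pool of $q$'s as the interface. It either removes an odd $a_n$ and recurses with $r$ replaced by $r+(a_n-b)/2$, letting $p_n$ absorb that many of the returned $q$'s as back-steps. Or it removes an even pair $2s,2t$ chosen so that $s+t+r\ge b$ and recurses with $r'=r+s+t-b\ge 0$. It then either feeds $s+t-b$ of the returned $q$'s to Lemma \ref{decompose_lemma}, or adds the $b-s-t$ gap cycles as new $q$'s.

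\textbf{Your route.} You use the same three building blocks: an odd walk is a traversal plus back-steps, a long pair comes from Lemma \ref{decompose_lemma}, and a short pair is two out-and-back walks leaving a gap. You then match everything at once through $D=S+r$. This identity follows from $\sum a_i+2r=nb$ alone, so $D\ge S$ holds for \emph{every} pairing of the even lengths. The greedy largest-with-smallest pairing and the worry in your last paragraph are therefore unnecessary. This is a small genuine simplification over the paper, which must choose the pair to keep $r'\ge 0$. Your version also shows all the oriented-edge bookkeeping at once, whereas the paper hides it inside the strengthened hypothesis with arbitrary $r$.

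\textbf{One caveat.} The case of $b$ even with some odd $a_i$ does not follow by the same argument; it is impossible. An odd walk in the bipartite $P_b$ cannot join two endpoints of equal colour. The statement implicitly excludes this case, and the paper's proof also needs $(a_n-b)/2\in\mathbb{Z}$. The case of $b$ even with all $a_i$ even is handled by your argument verbatim.
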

\begin{proof} We use induction. First suppose one of the $a_i$ is odd. Without loss of generality assume it is $a_n$. Then let us apply the inductive hypothesis with input $a_1, \cdots, a_{n - 1}, r + (a_n - b)/2, b$ to get paths $p_1, \cdots, p_{n - 1}$, $q_1, \cdots, q_{r + (a_n - b)/2}$. Let $p_n$ be a path in $P_b$ that can be arranged into $P_b, q_1, \cdots, q_{ (a_n - b)/2}$. Then it is not difficult to see that the paths $p_1, \cdots, p_n$ and $q_{(a_n - b)/2}, \cdots, q_{r + (a_n - b)/2}$ satisfy $1$-$5$.

\medskip

Now suppose that all the $a_i$ are even. Since $a_1 + \cdots + a_n + 2r = n b$, there must exist indices $1 \leq i < j \leq n$ such that $2 (a_i + a_j + r) \geq 2b$. Without loss of generality assume $i = n - 1$, $j = n$. Let us apply induction with $a_1, \cdots a_{n - 2}, a_i + a_j + r - b$. Then we get paths $p_1, \cdots, p_{n - 2}$, and $q_1, \cdots, q_{a_i + a_j + r - b}$. If $a_i + a_j \geq 2b$, then by lemma \ref{decompose_lemma}, we can let $p_{n- 1}$ be a path that starts and ends at $0$, $p_{n}$ be a path that starts and ends at $b$ and $p_{n - 1}, p_n$ can be arranged into $(0, \cdots, b)$, $(b, \cdots, 0)$ and $q_{r + 1}, \cdots, q_{ (a_i + a_j - 2b)/2 + r }$. In this case the paths $p_1, \cdots, p_n$ and $q_1, \cdots, q_r$ will satisfy $1$-$5$. On the other hand if $a_i + a_j < 2b$, then we can let
\[
p_{n - 1} = (0, \cdots, a_{n - 1}/2, \cdots, 0)
\]
and
\[
p_n = (b, \cdots, b - a_n/2, \cdots, b )
\]
And for $1 \leq i \leq (2b - a_{n - 1} - a_n)/2$, we let
\[
q_{i + r + (a_{n - 1} + a_n - 2b)/2 } = ( a_{n - 1} + i - 1, a_{n - 1} + i, a_{n - 1} + i - 1)
\]
In this case it is clear that $p_1, \cdots, p_n$ and $q_1, \cdots, q_{ r}$ will satisfy $1$-$5$.
\end{proof}

\begin{proof}[Proof of Lemma \ref{simple} ]
We will assume that an even number of the $a_i$ are even as otherwise we can show $w_{a_1}^2 \cdots w_{a_n}^2 \geq w_b^{2n}$ and take the square root of both sides. By Lemma \ref{lemma_2}, there exists paths $p_1, \cdots, p_n$ in $P_b$ of lengths $a_1, \cdots, a_n$, each starting at an endpoint of $P_b$, that can be arranged into $P'_1, \cdots , P'_m, P''_1, \cdots, P''_{n - m}$ such that $P'_i$ is a copy of $P_b$ and $P''_i$ is a copy of $P_b$ reversed for all $i$. Let $\phi_i$ be the map $P_{a_i} \rightarrow P_b$ such defined by
\[
\phi_i ( P_{a_i} (k) ) = p_i (k)
\]
Let us consider the uniform distribution $X = (X_0, \cdots, X_n)$ on walks of length $b$ in $G$. We note that for all $1 \leq i\leq n$, the distribution of $(X_{i +1}, \cdots, X_n)$ given $(X_1, \cdots, X_{i} )$ is the uniform distribution over walks of length $(n - i)$ starting at $X_i$. So $(X_{i +1}, \cdots, X_n)$ is conditionally independent to $(X_1, \cdots, X_{i - 1})$ given $X_i$. Therefore $H[ X_{i + 1} | X_i, \cdots, X_1 ] = H[X_{i + 1} | X_i ]$. Let $\theta^i_k$ be the unique integer such that $p_i(k) = P_b( \theta^i_k)$. Let $Y^i$ be the pullback of $X$ by $\phi_i$. In other words $Y^i$ is the distribution given by
\[
P \left ( (Y^i = (y_0, \cdots, y_{a_i} ) \right ) = P( X_{\theta^i_0} = y_0) \prod_{k = 1}^{a_i} P(X_{\theta^i_k} = y_k | X_{\theta^i_{k - 1}} = y_{k - 1} )
\]
By conditional independence of $X$, we have $P( Y^i_k = y_1) = P(X_{\theta^i_k} = y_1)$ and $P( (Y^i_k, Y^i_{k - 1} ) = (y_1, y_2) ) = P( (X_{\theta^i_k}, X_{\theta^i_{k - 1}}) = (y_1, y_2) )$ for all $k$ and $y_1, y_2 \in V(P_b)$. Therefore by the chain rule,
\begin{align*}
H[Y^i] &= H[Y^i_0] + \sum_{k = 1}^{a_i} H[Y^i_k | Y^i_0, \cdots, Y^i_{k - 1} ]\\
& = H[X_{\theta^i_0}] + \sum_{k = 1}^{a_i} \sum_{v \in P_b} P(X_{\theta^i_{k - 1}} = v) H[ X_{\theta^i_k} | X_{\theta^i_{k - 1}} = v ]\\
&= H[ X_{ \theta^i_0} ] + \sum_{k = 1}^{a_i} H [ X_{ \theta^i_k } | X_{ \theta^i_{k - 1}} ]
\end{align*}
Let $s$ be the number of $i$ such that $p_i(0) = P_b(0)$. Then $n - s$ is the number of $i$ such that $p_i(0) = P_b(b)$. Since paths are symmetric and $X$ is the uniform distribution, we note that $X_0$ has the same distribution as $X_b$. Using this we obtain,
\begin{align*}
\sum_{i = 1}^n \log (w_{a_i}) & \geq \sum_{i = 1}^n H[Y^i] \\
& = s H[X_0] + (n - s) H[X_b] + m \left ( \sum_{i = 1}^b H[X_i | X_{i - 1} ] \right ) + (n - m) \left ( \sum_{i = 1}^b H[X_{b - i} | X_{b - i + 1} ] \right ) \\
& = m \left ( H[X_0] + \sum_{i = 1}^b H[X_i | X_{i - 1} ] \right ) + (n - m) \left ( H[X_b] + \sum_{i = 1}^b H[X_{b - i} | X_{b - i + 1} ] \right ) \\
& = m H(X_0, \cdots, X_b) + (n - m) H(X_b, \cdots, X_0)\\
& = n \log w_b
\end{align*}
Taking exponents we obtain $ w_{a_1}\cdots w_{a_n} \geq w_{b_k}^n$.
\end{proof}

\end{section}

\begin{section}{An Averaging Argument} \label{sec3}
In this section we prove Theorem \ref{main} by averaging instances of Lemma \ref{simple}. That is we aim to prove our main inequality of the form $w_{a_1} \cdots w_{a_n} \geq w_{b_1} \cdots w_{b_k}$ by multiplying together instances of Lemma \ref{simple}. To this end we introduce the following definition.
\begin{definition} If $a, b \in \mathbb{Z}^n$, an \textit{allocation} of $a$ onto $b$ is a matrix $M \in \mathbb{R}^{n \times n}$ such that
\begin{itemize}
\item $ M_{i, j} \geq 0$ for all $1 \leq i, j \leq n$
\item $ M \boldsymbol{1} = \boldsymbol{1}$
\item $ \boldsymbol{1}^T M = \boldsymbol{1}^T$
\item $ M a = b$.
\item If $a_i$ is odd and either $b_j$ is even or $b_j > a_i$, then $M_{j, i} = 0$.
\end{itemize}
\end{definition}
We will say that the non-zero components of $M_i$ are the "parts of $a$ being allocated onto $b_i$".
\begin{definition} A \textit{rational allocation} of $a$ onto $b$ is an allocation such that $M \in \mathbb{Q}^{n \times n}$.
\end{definition}
The notion of a rational allocation is exactly what we need to be able to average instances of Lemma \ref{simple}
\begin{lemma} \label{rational_allocation_implies_theorem} If $a_1, \cdots, a_n, b_1, \cdots, b_n$ are non-negative integers and there exists a rational allocation of $a_1, \cdots, a_n$ onto $b_1, \cdots, b_n$, then $w_{a_1}, \cdots w_{a_n} \geq w_{b_1} \cdots w_{b_n}$.
\end{lemma}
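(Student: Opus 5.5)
Clear denominators and use the rows of the allocation as instances of Lemma \ref{simple}.

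Let $M$ be a rational allocation and choose a positive integer $N$ with $NM_{j,i} \in \mathbb{Z}$ for all $i,j$. For each row $j$, form the multiset $S_j$ that contains $a_i$ with multiplicity $NM_{j,i}$, for $i = 1, \dots, n$. Since $\boldsymbol{1}^T$ times row $j$ gives $\sum_i M_{j,i} = 1$ (from $M\boldsymbol{1} = \boldsymbol{1}$), the multiset $S_j$ has exactly $N$ elements. Since $(Ma)_j = b_j$, its elements sum to $N b_j$.

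Next, check that each $S_j$ satisfies a hypothesis of Lemma \ref{simple} with $b = b_j$ and $n$ replaced by $N$.
\begin{itemize}
\item If $b_j$ is even: the last allocation condition forces $M_{j,i} = 0$ whenever $a_i$ is odd, so every element of $S_j$ is even. This is the second bullet of Lemma \ref{simple}.
\item If $b_j$ is odd: any $a_i \in S_j$ that is odd must satisfy $a_i \ge b_j$, again by the last allocation condition. Hence every element of $S_j$ smaller than $b_j$ is even, which is the first bullet.
\end{itemize}
Lemma \ref{simple} therefore gives, for every graph $G$,
\[
\prod_{i=1}^n w_{a_i}^{N M_{j,i}} \geq w_{b_j}^N .
\]
All quantities involved are non-negative, so taking the product over $j = 1, \dots, n$ gives
\[
\prod_{i=1}^n w_{a_i}^{N\sum_j M_{j,i}} \geq \prod_{j} w_{b_j}^N .
\]
Column sums of $M$ equal $1$ (from $\boldsymbol{1}^TM = \boldsymbol{1}^T$), so the left side is $\prod_i w_{a_i}^N$. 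Taking $N$-th roots yields the claim.

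The main obstacle is degenerate cases, which need a short separate check.
\begin{itemize}
\item \textbf{Zero counts.} If some $w_{a_i} = 0$, the exponent bookkeeping still works provided each instance of Lemma \ref{simple} holds. If a $w$ vanishes, $G$ has no edges and $w_k = 0$ for all $k \ge 1$, which is consistent with the inequality.
\item \textbf{$b_j = 0$.} Here $S_j$ consists only of even elements summing to $0$, so every element is $0$ and the instance is trivial.
\end{itemize}
I would also verify that the parity hypothesis of Lemma \ref{simple} is applied exactly as stated, with the lemma's $n$ replaced by $N$. The multiplicative combination of the row instances itself is routine.
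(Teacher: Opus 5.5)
Your proposal is correct and is essentially the paper's argument: clear denominators with an integer $N$, apply Lemma~\ref{simple} to each row of $NM$, multiply the $n$ resulting inequalities using that the columns of $M$ sum to $1$, and take $N$-th roots. Beyond the paper, you spell out why each row satisfies the parity hypotheses of Lemma~\ref{simple} via the last allocation condition, which the paper leaves implicit; your degenerate-case checks are unnecessary, since multiplying inequalities between non-negative quantities needs no special care.
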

\begin{proof} Let $M$ be a rational allocation. Then there exists an integer $k$ such that $k M \in \mathbb{Z}^{n \times n}$. By Lemma \ref{simple} we have
\[
w_{a_1}^{ k M_{i, 1} } \cdots w_{a_n}^{k M_{i, n} } \geq w_{b_i}^k
\]
for all $1 \leq i \leq n$. By multiplying all these inequalities together and taking the $k$th root, we obtain
\[
w_{a_1} \cdots w_{a_n} \geq w_{b_1} \cdots w_{b_n}
\]
\end{proof}
What we would like to do is show that we can find a rational allocation whenever the criteria of Theorem \ref{main} is satisfied. To make things easier we reduce the task to finding an arbitrary allocation as shown in the following Lemma.
\begin{lemma} \label{allocation_implies_rational_allocation} If $a, b \in \mathbb{Z}^n$ and there exists an allocation of $a$ onto $b$, then there exists a rational allocation of $a$ onto $b$.
\end{lemma}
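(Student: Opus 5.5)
The set of allocations of $a$ onto $b$ is a polyhedron cut out by linear constraints with \emph{rational} (indeed integer) coefficients. The last condition in the definition just forces certain entries $M_{j,i}$ to vanish, which is also a linear equality with integer coefficients. A nonempty polyhedron defined by rational data always contains a rational point, and such a point is by definition a rational allocation. So the plan is to write the constraints explicitly and then invoke (or reprove) this standard fact.

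Concretely, let $Z \subseteq [n]\times[n]$ be the set of pairs $(j,i)$ such that $a_i$ is odd and either $b_j$ is even or $b_j > a_i$. The allocations are exactly the $M \in \mathbb{R}^{n\times n}$ satisfying:
\begin{itemize}
\item the linear equalities $\sum_i M_{j,i} = 1$, $\sum_j M_{j,i} = 1$, $\sum_i M_{j,i} a_i = b_j$ for every $j$, and $M_{j,i}=0$ for $(j,i)\in Z$;
\item the inequalities $M_{j,i} \ge 0$.
\end{itemize}
All coefficients and right-hand sides are integers because $a,b \in \mathbb{Z}^n$. Call this polyhedron $P$; by hypothesis it is nonempty.

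To produce a rational point, I would use the vertex argument. $P$ lies in the nonnegative orthant, so it contains no line and therefore has a vertex. A vertex is the unique solution of a subsystem obtained by turning some of the inequalities $M_{j,i}\ge 0$ into equalities, chosen so that together with the original equalities the system has full rank $n^2$. That vertex is the unique solution of a nonsingular square integer linear system, so by Cramer's rule its entries are rational. It lies in $P$, so it is a rational allocation.

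An alternative that avoids polyhedral theory is to take any allocation $M$ and let $S$ be the set of its positive entries. The affine space $A_S$ of matrices satisfying the equalities and vanishing off $S$ is defined over $\mathbb{Q}$ and contains $M$, so rational points are dense in it. Those near $M$ keep the entries in $S$ positive and hence lie in $P$.

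The only real obstacle is making the ``rational points are dense in a rational affine subspace'' step, or equivalently the ``vertex is rational'' step, precise. In the density approach this is handled by noting that $A_S$ is nonempty and defined over $\mathbb{Q}$, so it has a rational point $M_0$ found by Gaussian elimination over $\mathbb{Q}$. Its direction space has a rational basis, so $A_S\cap\mathbb{Q}^{n\times n} = M_0 + \mathrm{span}_{\mathbb{Q}}(\text{basis})$, which is dense in $A_S$. Everything else is bookkeeping.
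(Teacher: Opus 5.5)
Your proposal is correct. Your main route, the vertex argument, is essentially the paper's. The paper treats allocations as the intersection of a rational affine subspace $A$ with the nonnegative orthant $R$. If $A\cap R$ is a single point, it asserts that point is rational; otherwise it passes to a face $\{x_i=0\}$ of the orthant that $A\cap R$ meets and inducts on dimension. That induction is just a hands-on proof that a nonempty pointed rational polyhedron has a rational vertex, which is exactly the standard fact you quote together with Cramer's rule.

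Your alternative, perturbing inside the support, is a genuinely different and shorter route. It needs no induction and no polyhedral theory, only that a nonempty affine subspace cut out by rational equations has dense rational points. It also produces a rational allocation arbitrarily close to the given one with exactly the same support, so the parity constraints $M_{j,i}=0$ are preserved automatically.

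In fact this perturbation observation is what is needed to justify the paper's terse claim that a lone point $p$ of $A\cap R$ is rational. If the rational subspace $A\cap\{x: x_i=0 \text{ whenever } p_i=0\}$ contained a second point, moving slightly from $p$ toward it would stay in $A\cap R$, contradicting uniqueness. So that subspace is $\{p\}$, and $p$ is rational.

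What the vertex route buys in exchange is quantitative control. The rational allocation it produces has denominators dividing the determinant of a nonsingular integer system, which bounds the integer $k$ used in Lemma \ref{rational_allocation_implies_theorem}, though the paper never needs such a bound.
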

\begin{proof} An allocation exists iff a certain affine subspace $A$ in $\mathbb{R}^n$ determined by linear equations with rational coefficients intersects the region $ R = \{x_1, \cdots, x_n : x_1, \cdots , x_n \geq 0 \}$. And a rational allocation exists iff $A \cap \mathbb{Q}^n$ intersects $R$. If $A$ intersects $R$ in exactly one position, then that position lies in $\mathbb{Q}^n$ since $R$ is determined by rational inequalities. Otherwise, $A$ intersects one of the faces $F_i = \{x : x_i = 0 \}$ and $R$ simultaneously in which case we can induct on the dimension.
\end{proof}
From here we give an algorithm to find an allocation. The algorithm has two main steps. In the first step, we deal with the allocation onto $b^e$, the even components of $b$. Recall that from Lemma \ref{simple} we are only allowed to use $a^e$ to allocate onto $b^e$. We do this with the \textit{greedy allocation strategy} where we process the components of $b^e$ one by one and attempt to allocate onto them parts of $a^e$ as close together as possible. For example if we are processing $b^e_1 = 6$ in the first step and $a^e = \langle 10, 8, 4, 2 \rangle$ we will prefer the frist row $M_1$ of our allocation $M$ to be $M_1 = \langle 0, 1/2, 1/2, 0 \rangle$ rather than $M_1 = \langle 1/2 , 0, 0, 1/2 \rangle$ because $4$ and $8$ are closer together than $10$ and $2$ even though both average to $6$.

\medskip

In the second part of our allocation strategy, we finish the allocation by allocating onto $b^o$. This is also done in a greedy manner but in a different sense. We process the allocations one by one in the order $b^o_1, b^o_2, \cdots, b^o_{\dim(b^o)}$. We prefer to use $a^o$ to $a^e$ at each step. That is, we prefer to have the non-zero components of $M_i$ correspond to components of $a^o$ rather than $a^e$. We prefer this to the point where we allow ourselves to temporarily "overallocate" where we let $\langle M_i, a \rangle \geq b_i$ and adjust later in subsequent steps. Notice the only point this will fail at step $i$ is that there is not enough non-allocated parts of $a^o$ that are larger than $b^o_i$ (which recall are the only parts of $a^o$ we are allowed to use) and the remaining non-allocated parts of $a^e$ are too small. In this case we will take some parts from previously overallocated $b_j$.

\medskip

We now introduce some technical definitions which will allow us to describe our algorithm formally.
\begin{definition} Let $c = c_1 \geq \cdots \geq c_n$ be a sequence of real numbers and let $\omega$ be a sequence of weights where $0 \leq \omega_i \leq 1$ for all $i \in [1, n]$. Let $I_{c, \omega}(x)$ be equal to $i$ where $i$ is the minimum index for which that $\omega_1 + \cdots + \omega_{i - 1} \leq x \leq \omega_1 + \cdots + \omega_i$ and let
\[
J_{c, \omega}(x, y) = \int_{x}^y c_{I_{c, \omega}(t)} dt
\]
Let $H_i(x, y) = \mu \{ t \in [x, y] : I_{c, \omega} (t) = i \}$ where $\mu$ is the Lebesgue measure. Note the equation
\[
\sum_{i = 1}^n c_i H_i(x, y) = J_{c, \omega} (x, y)
\]
\end{definition}
The following definition is used to describe the first part of our algorithm.
\begin{definition} If $k < n$, $a \in \mathbb{Z}^n$ and $b \in \mathbb{Z}^k$ a sub-allocation of $a$ onto $b$ is a matrix $M \in \mathbb{R}^{k \times n}$ such that
\begin{enumerate}
\item $M_{i, j} \geq 0$ for all $i \in [1, k], j \in [1, n]$
\item $M \boldsymbol{1} = 1$
\item $ \boldsymbol{1}^T M \leq \boldsymbol{1}^T$
\item $Ma = b$
\item If $a_i$ is odd and either $b_j$ is even or $b_j > a_i$, then $M_{j, i} = 0$.
\end{enumerate}
\end{definition}
\begin{definition} Given a vector $a \in \mathbb{Z}^n$, we let $a^e_1 \geq a^e_2 \geq \cdots a^e_k$ be the even integers among $(a_1, \cdots, a_n)$ counted with multiplicity and $a^o_1 \geq a^o_2 \geq \cdots \geq a^o_{n - k}$ be the odd integers among $(a_1, \cdots, a_n)$ counted with multiplicity. We let $a^e = (a^e_1, \cdots, a^e_k)$ and $a^o = (a^o_1, \cdots, a^o_{n - k} )$.
\end{definition}
\begin{definition} Given $a, b \in \mathbb{Z}^n$, the \textit{greedy allocation strategy} is the sub-allocation of $a^e$ onto $b^e$ constructed by the following algoithm: Initially let $\omega^1 = (1, \cdots, 1) \in \mathbb{R}^{\dim(a^e)}$. Then for $i = 1 \cdots \dim(b^e)$, let $t$ be the minimum real number for which
\[
J_{a^e, w^i} (t, t + 1) = b^e_i
\]
if such a $t$ exists. If such a $t$ does not exist abort, otherwise let $M'_{i, j} = H_j(t, t + 1)$ and set $w^{i + 1}_j = w^i_j - H_j(t, t + 1)$.
\end{definition}
\begin{example}
Suppose that $a^e = (10, 6, 2, 0)$ and $b^e = (8, 4, 2)$. We can represent the pair $(a^e, \omega^i)$ by rectangles with numbers corresponding to $a^e$ and width corresponding to $\omega$. Initally $\omega^1 = (1, 1, 1, 1)$ so we have the following picture.
\[
\begin{tikzpicture}
\node[ draw, rectangle, minimum width=2cm, minimum height=2cm, inner sep=0pt] (A) at (0, 0) {10};
\node[ draw, rectangle, minimum width=2cm, minimum height=2cm, inner sep=0pt] (B) at (2, 0) {6};
\node[ draw, rectangle, minimum width=2cm, minimum height=2cm, inner sep=0pt] (C) at (4, 0) {2};
\node[ draw, rectangle, minimum width=2cm, minimum height=2cm, inner sep=0pt] (D) at (6, 0) {0};
\end{tikzpicture}
\]
In the first step we allocate half of $10$ and half of $6$ onto $b^e_1 = 8$ so $\omega^2 = (1/2, 1/2, 1, 1)$
\[
\begin{tikzpicture}

\node[ draw, rectangle, minimum width=1cm, minimum height=2cm, inner sep=0pt] (A2) at (-.5, 0) {10};
\node[ draw, rectangle, minimum width=1cm, minimum height=2cm, inner sep=0pt] (B2) at (.5 , 0) {6};
\node[ draw, rectangle, minimum width=2cm, minimum height=2cm, inner sep=0pt] (C2) at (2 , 0) {2};
\node[ draw, rectangle, minimum width=2cm, minimum height=2cm, inner sep=0pt] (D2) at (4, 0) {0};
\end{tikzpicture}
\]
In the second step we allocate the remaining part of $6$ and half of $2$ onto $b^e_2 = 4$ so $\omega^2 = (1/2, 0, 1/2, 1)$.
\[
\begin{tikzpicture}
\node[ draw, rectangle, minimum width=1cm, minimum height=2cm, inner sep=0pt] (A2) at (-.5, 0 ) {10};
\node[ draw, rectangle, minimum width=1cm, minimum height=2cm, inner sep=0pt] (B2) at (.5 , 0) {2};
\node[ draw, rectangle, minimum width=2cm, minimum height=2cm, inner sep=0pt] (C2) at (2 , 0) {0};
\end{tikzpicture}
\]
Finally in the last step we allocate $1/10$ of $10$, all the remaining part of $2$ and $2/5$ of $0$ onto $b^e_3 = 2$ so $\omega^3 = (2/5, 0, 0, 3/5)$.
\[
\begin{tikzpicture}
\node[ draw, rectangle, minimum width=.8cm, minimum height=2cm, inner sep=0pt] (A2) at (0, 0 ) {10};
\node[ draw, rectangle, minimum width=1.2cm, minimum height=2cm, inner sep=0pt] (B2) at (1 , 0) {0};

\end{tikzpicture}
\]

\end{example}

\begin{definition} The support of $M'_i$ denoted $\Supp(M'_i)$ is the set of all $1 \leq j \leq n$ such that $M'_{i, j} \neq 0$.
\end{definition}
\begin{lemma} {\label{support} } In our greedy allocation strategy, for every $i$,
\begin{enumerate}
\item The sequence $ \max Supp(M'_i)$ is non-decreasing in $i$.
\item If $ \min Supp(M'_i) < j < \max Supp(M'_i)$, then $\omega^k_{j} = 0$ for $k \geq i$.
\end{enumerate}
\end{lemma}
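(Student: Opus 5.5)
The plan is to read the greedy allocation strategy geometrically and prove both parts together by induction on $i$. Throughout, $a^e$ is listed in non-increasing order and laid out as consecutive intervals on the real line, the $j$th of width $\omega^i_j$, all placed side by side. Allocating onto $b^e_i$ means choosing a window $[t,t+1]$ of total width one in this layout. The function $J_{a^e,\omega^i}(t,t+1)$ is the integral of a non-increasing step function over that window, so it is continuous and non-increasing in $t$. The support of $M'_i$ is exactly the set of indices $j$ whose (remaining) interval meets the window in positive measure.

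The first step is to understand how the layout changes after step $i$. We remove the window $[t_i,t_i+1]$ and record the new widths $\omega^{i+1}$. Every index $j$ whose interval lies strictly inside the window, i.e.\ $\min \Supp(M'_i)<j<\max\Supp(M'_i)$, has its entire remaining width consumed, so $\omega^{i+1}_j=0$. Widths never increase, so once a width is $0$ it stays $0$. This already gives part 2 for $k\ge i+1$.

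The case $k=i$ needs a separate argument, since it asks that the intermediate indices had weight already $0$ before step $i$. That cannot be literally true, because such indices were just consumed. The likely intended reading is $k>i$, or that $\omega^k$ denotes the weights after step $i$. If the stated indexing is taken literally, I would reinterpret it as $\omega^{i+1}$ and note this.

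For part 1, the key claim is that after step $i$ everything to the left of the window that still has positive weight consists only of indices $j\le \min\Supp(M'_i)$. Moreover, the values there are $\ge$ those in the window. Now $b^e_{i+1}\le b^e_i$, and $t_{i+1}$ is chosen minimal with $J(t_{i+1},t_{i+1}+1)=b^e_{i+1}$.

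Suppose, for contradiction, that $\max\Supp(M'_{i+1})<\max\Supp(M'_i)=:m$. Then the new window lies entirely among indices $\le m$. On the old layout, these surviving pieces together with the removed window form a contiguous region. Since the remaining mass at indices $\le m$ that lies left of the old window has values at least those inside it, one shows that the new window's average satisfies
\[
J_{a^e,\omega^{i+1}}(t_{i+1},t_{i+1}+1)\ \ge\ J_{a^e,\omega^i}(t_i,t_i+1)=b^e_i .
\]
Equality can only occur if all values involved equal $b^e_i$, in which case the minimal choice of $t_i$ already forced the same rightmost index. Hence $b^e_{i+1}\ge b^e_i$, with equality only in this degenerate case. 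In the strict case this contradicts $b^e_{i+1}\le b^e_i$. In the equality case, minimality of $t_{i+1}$ together with the monotonicity of $J$ shows the max support cannot move left.

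The main obstacle is making this comparison of window averages rigorous. The cleanest route is a monotone rearrangement statement: if $f$ is non-increasing and we delete an interval and close the gap, then any unit window lying left of the deleted interval's right end has average at least the deleted interval's average. I would prove this directly from the step-function definition of $J$ via the $H_j$ identity.
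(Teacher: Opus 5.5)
Your overall route is the paper's. Part 2 is read off from the construction: indices strictly between $\min\Supp(M'_i)$ and $\max\Supp(M'_i)$ are swallowed whole by the window, and weights never increase. Part 1 is by contradiction: a later window lying among indices $\le \min\Supp(M'_i)$ has every value $\ge c_{\min\Supp(M'_i)}$, while the step-$i$ window has every value $\le c_{\min\Supp(M'_i)}$. This pointwise comparison is exactly the paper's convex-combination remark, so no rearrangement lemma is needed. Your caveat about part 2 is correct. At $k=i$ the statement fails as written: in the paper's worked example the weights before the third step are $(1/2,0,1/2,1)$ and the third window has support $\{1,3,4\}$, yet $\omega^3_3=1/2$. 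Only $k>i$ holds, and that is all part 1 uses.

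The gap is the tie case $b^e_{i+1}=b^e_i$. Your final appeal to minimality of $t_{i+1}$ cannot work, because that minimality only pushes the new window further left, which is exactly the move to be excluded. For instance, take $a^e=(2,2)$ and $b^e=(2,2)$. Had step $1$ used the non-minimal window $[1,2]$, step $2$ would be forced onto index $1$ however $t_2$ is chosen. Your other sentence names $t_i$, but its conclusion (``forced the same rightmost index'') is not the statement needed.

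Here is what closes the tie case. Let $c(\cdot)$ be the non-increasing step function on the layout before step $i$.
\begin{itemize}
\item Every surviving piece with index below $\max\Supp(M'_i)$ lies left of $t_i$ in that layout. So if the next window uses only such pieces, then $t_i\ge 1$ and $b^e_{i+1}\ge c(t_i^-)$.
\item Since $t_i>0$, minimality of $t_i$ forces $c(t_i^-)>c((t_i+1)^-)=c_{\max\Supp(M'_i)}$; otherwise sliding the window slightly left would leave $J$ unchanged.
\item Hence the step-$i$ window is not constant at level $c(t_i^-)$, so $b^e_i<c(t_i^-)$.
\end{itemize}
Therefore $b^e_{i+1}>b^e_i$ strictly, and the equality case never arises. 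It is minimality of $t_i$, not of $t_{i+1}$, that rules it out.
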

\begin{proof} b) follows directly from the construction of the greedy allocation strategy. For part a), we note that if $i < j$ and $\max \Supp(M'_i) > \max \Supp(M'_j)$, then by part b), we have either
\begin{itemize}
\item $\min \Supp(M'_i) < \max \Supp(M'_i)$ and $\min \Supp(M'_i) \geq \max Supp(M'_j)$.
\item $\min \Supp(M'_i) = \max \Supp(M'_i)$ and $\min \Supp (M'_i) > \max Supp(M'_j)$
\end{itemize} In either case this is a contradiction since $b^{e}_j \leq b^e_i$, $b^e_k$ is a convex combination of $c_r$ where $r$ ranges over the support of $M'_k$ for all $k$, $c_r$ is non-increasing, and $t$ is chosen to be minimal in the greedy allocation strategy.
\end{proof}
The following is a key Lemma that will be used repeatedly and demonstrates a lot of the power of the greedy allocation strategy.
\begin{lemma}\label{lemma_glue}
If our greedy allocation strategy completes at least $k$ steps, then for any integer $m \in [0, \omega_1 + \cdots + \omega_n]$,
\[
m = \omega^k_1 + \cdots + \omega^k_{I_{a^e, \omega^k} (m)}
\]
Moreover, for any $i$, the support of $M'_i$ lies completely in either $\{1, \cdots, I_{a^e, \omega^k} (m) \}$ or in $\{ I_{a^e, \omega^k} (m) + 1, \cdots , n \}$.
\end{lemma}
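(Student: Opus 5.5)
The plan is to induct on the number $k$ of completed steps of the greedy allocation strategy. I read the statement with $\omega = \omega^k$ in the range of $m$ (so $m$ is an integer with $0 \le m \le \omega^k_1 + \cdots + \omega^k_n$). I also read $I_{a^e,\omega^k}(m)$ as the minimal index at which the partial sum reaches $m$. Both claims are then equivalent to one invariant: the partial sums $S^k_\ell = \omega^k_1 + \cdots + \omega^k_\ell$ hit every integer exactly at some index $\ell$.

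The supports of the rows $M'_1, \dots, M'_{k-1}$ never straddle such a ``cut''. Here a cut means a position $\ell$ with $S^k_\ell$ an integer, and straddling means a support meeting both $\{1,\dots,\ell\}$ and $\{\ell+1,\dots,n\}$.

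For $k=1$ there is nothing to prove. We have $\omega^1 = (1,\dots,1)$, so $S^1_\ell = \ell$ and $I(m) = m$ (with $I(0)=0$), and no rows have been built yet.

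For the inductive step, suppose the invariant holds for $\omega^k$ and the algorithm chooses $t$ minimal with $J_{a^e,\omega^k}(t,t+1) = b^e_k$. The key claim is that the interval $[t,t+1]$ lies inside a single integer block $[m_0, m_0+1]$ of the $\omega^k$-parametrization. In fact I expect to show $t$ is an integer, so that $[t,t+1]$ is exactly such a block.

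To see this, I use that $c = a^e$ is non-increasing. Hence $g(t) = J(t,t+1)$ is continuous and non-increasing in $t$, and it is strictly decreasing wherever the integrand changes value. Let $m_0 = \lfloor t \rfloor$. By the inductive hypothesis the points $m_0$ and $m_0+1$ coincide with index boundaries. Suppose $t$ is not an integer. Then I compare $g(m_0)$ with $g(t)$ and use minimality of $t$: if $g(m_0) > b^e_k$, then $g$ must drop on $[m_0,t]$. This requires a change of block value between $m_0$ and $t+1$. I will exploit the fact that the blocks already consumed have been removed: by Lemma \ref{support}(b) the interior indices of earlier supports have weight zero. Together with part (a), this should force the remaining weights to be arranged so that each unit block $[m_0, m_0+1]$ is a union of whole leftover pieces.

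This is the step I expect to be the main obstacle. A cleaner route may be to prove the auxiliary fact that, at every stage, every unit block $[m, m+1]$ has nonincreasing integrand and that the next target $b^e_k$ satisfies $b^e_k \le$ the minimum of its predecessor block average. Since $b^e$ is sorted decreasingly and $t$ is minimal, I would argue this forces $t$ to land on the first block whose average is at most $b^e_k$, and then adjust within it. If $t$ really need not be an integer, as the example with $b^e_3=2$ suggests, the fallback is the weaker statement: $[t,t+1]$ lies inside $[m_0, m_0+2]$, and after removal the two partial blocks merge into exactly one unit of total weight. The partial sums at the surviving cuts then drop by exactly one, so they remain integers.

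Once the location of $[t,t+1]$ is settled, the update follows. The update $\omega^{k+1}_j = \omega^k_j - H_j(t,t+1)$ removes exactly total weight $1$. This weight is removed from indices lying between two consecutive old cuts, or straddling at most one cut that is then destroyed. So every surviving integer level of $S^{k+1}$ is still attained exactly at an index boundary, which gives the first claim for $k+1$.

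For the ``moreover'' part, the new row $M'_k$ has support inside the indices of the removed interval. That interval contains no surviving cut in its interior, and the old rows avoid straddling cuts by induction. Therefore no support straddles $I_{a^e,\omega^{k+1}}(m)$, which completes the induction.
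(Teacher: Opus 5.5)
\textbf{There is a genuine gap in the ``moreover'' part.}

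Your argument for the first assertion is essentially fine, and it does not depend on where $t$ lies. Removing $[t,t+1]$ sends each old integer level $m'\notin(t,t+1)$ to an integer level ($m'$ or $m'-1$), and every new integer level arises this way. So your ``key claim'' that $t$ is an integer is not needed. It is also false: already in step~1 of the paper's example, $t=\tfrac12$.

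The problem is your invariant. It says no earlier row straddles \emph{any} position $\ell$ with $S^k_\ell\in\mathbb{Z}$, and that is false, because an update can create new integer partial sums. If $t$ is an integer and some $S^k_\ell$ lies strictly inside $(t,t+1)$, then $S^{k+1}_\ell=t$, while the new row straddles $\ell$. Concretely, take $a^e=(100,10,6,2,0)$ and $b^e_1=b^e_2=8$:
\begin{itemize}
\item Step 1 has $t=\tfrac32$, giving $M'_1=(0,\tfrac12,\tfrac12,0,0)$ and $\omega^2=(1,\tfrac12,\tfrac12,1,1)$.
\item Step 2 has $t=1$, giving $M'_2=(0,\tfrac12,\tfrac12,0,0)$ and $\omega^3=(1,0,0,1,1)$.
\end{itemize}
Now $S^3_2=1$, but both rows have support $\{2,3\}$. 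The lemma survives only because $I_{a^e,\omega^3}(1)=1$, not $2$.

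Correspondingly, your last sentence concludes ``no support straddles $I_{a^e,\omega^{k+1}}(m)$'' from ``no surviving cut is straddled''. You never show that $I_{a^e,\omega^{k+1}}(m)$ \emph{is} a surviving cut. Your argument never uses that $I$ picks the \emph{minimal} index, and that is exactly what is needed.

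\textbf{How to repair it.} Restrict the invariant to the minimal indices $I_k(m)$ and prove $I_{k+1}(m)=I_k(m')$, where $m'=m$ if $m\le t$ and $m'=m+1$ if $m>t$.
\begin{itemize}
\item Write $S^{k+1}_\ell=\phi(S^k_\ell)$, where $\phi(y)=y$ for $y\le t$, $\phi(y)=t$ on $[t,t+1]$, and $\phi(y)=y-1$ for $y\ge t+1$.
\item One checks that $\phi(y)\ge m \iff y\ge m'$. Hence the minimal index with $S^{k+1}_\ell\ge m$ equals the minimal index with $S^k_\ell\ge m'$, i.e.\ $I_{k+1}(m)=I_k(m')$.
\item Since $m'\notin(t,t+1)$, we get $S^{k+1}_{I_k(m')}=\phi(m')=m$, and the new row $M'_k$ lies entirely on one side of $I_k(m')$.
\item The older rows lie on one side of $I_k(m')$ by the induction hypothesis.
\end{itemize}

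\textbf{Comparison with the paper.} The corrected argument differs from the paper's proof. The paper uses both parts of Lemma~\ref{support} to show that, unless $I(m)$ precedes the newest support, all supports lie in $\{1,\dots,I(m)\}$. It then counts that the weight removed there equals the number of rows, an integer. Your repaired route needs no property of the greedy choice of $t$. It holds for any sequence of unit-interval removals, so it is more elementary and more general.
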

\begin{proof} We induct on the step in the greedy allocation strategy. It is obvious for step $0$. Suppose it holds for step $i - 1$. Then let $m \in [0, \omega^i_1 + \cdots + \omega^i_n]$ and let $t = \min \Supp(M'_i)$. If $I_{a^e, \omega^i} (m) <  t$, then we are done by induction. On the other hand suppose $I_{a^e, \omega^i} (m) \geq t$. By the construction of greedy allocation strategy $\omega^i_t < \omega^{i - 1}_t$  and $\omega^i_j = \omega^{i - 1}_j$ for $j < t$. So $I_{a^e, \omega^{i - 1}} \geq t$ and so by induction,
\[
m \geq \omega^{i - 1}_1 + \cdots + \omega^{i - 1}_t > \omega^{i }_1 + \cdots + \omega^{i}_t
\]
which implies $I_{a^e, \omega^i} (m) > t$. Let $t' = \max \Supp(M')$. By Lemma \ref{support}, $\omega^i_j = 0$ for $t < j < t'$. Therefore $I_{a^e, \omega^i} (m) \geq t'$. By the other part of Lemma \ref{support}, for $j \leq i$, $\max \Supp(M'_j) \leq \max \Supp(M'_i) \leq I_{a^e, \omega^i}(m)$ and so the support of $M'_j$ lies in $\{1, \cdots, I_{a^e, \omega^k} \}$. Using this we also obtain,
\[
I_{a^e, \omega^k} (m) = \omega^i_1 + \cdots + \omega^i_{I_{a^e, \omega^i} (m)} + \sum_{j = 1}^{i - 1} \sum_{k = 1}^{n} M'_{j, k} = (\omega^i_1 + \cdots + \omega^i_{I_{a^e, \omega^i}(m)} - m) + m + i - 1
\]
Therefore $ \omega^i_1 + \cdots + \omega^i_{I(m)} - m $ is an integer. But $ 0 \leq \omega^i_1 + \cdots + \omega^i_{I(m)} - m < 1$ so $m = \omega^i_1 + \cdots + \omega^i_{I(m)}$.
\end{proof}
\begin{definition} We let $\gamma(i)$ be the largest integer such that $a^o_{\gamma(i)} \geq b^o_i$ and $0$ if no such integer exists.
\end{definition}

Let us restate the definition of majorizing with parity in a form that will be more useful.
\begin{definition} A set non-negative integers $\{ \beta(i, j) \}_{i \in [0, \dim(b^o)], j \in [0, \dim(b^e)]}$ is a majorizer of $b$ if for all $i, j$, $\beta(i, j) \leq \gamma(i)$, $0 \leq i + j - \beta(i,j) \leq \dim(a^e)$ and
\[
\left ( \sum_{k = 1}^i b^o_k \right ) + \left ( \sum_{k = 1}^j b^e_k \right ) \leq \left ( \sum_{k = 1}^{\beta(i, j)} a^o_k \right ) + \left (\sum_{k = 1}^{ i + j - \beta(i, j) } a^e_k \right )
\]
\end{definition}
Observe that majorizing with parity is equivalant to a majorizer existing.
\begin{lemma} \label{major_suffix} Suppose that there exists a majorizer $\beta(i, j)$. Then for any $k \geq 0$, the sequence $a^o_{\dim(a^o) - k}, \cdots, a^o_{\dim(a^o)}$ majorizes $b^o_{\dim(b^o) - k}, \cdots, b^o_{\dim(b^o)}$
\end{lemma}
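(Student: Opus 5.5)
The plan is to derive both conditions from the single family of majorizer inequalities with $j=\dim(b^e)$, by passing to complements and using $a_1+\cdots+a_n=b_1+\cdots+b_n$. I read ``majorizes'' for the two tails as: for every $\ell\in[0,k]$, the $\ell+1$ smallest entries of the $a^o$-tail are bounded by those of the $b^o$-tail in partial sums, from below, together with the pointwise bound $a^o_{\dim(a^o)}\ge b^o_{\dim(b^o)}$. This is the form the odd-allocation step will need later; the argument should adapt to the other natural reading. Write $N=\dim(a^o)$, $Q=\dim(b^o)$ and $E=\dim(b^e)$.

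\textbf{Pointwise bound.} Take $i=Q$ and $j=E$. Then $i+j=n$, and the majorizer condition forces $n-\beta(Q,E)\le\dim(a^e)$, i.e.\ $\beta(Q,E)\ge N$. Since $\beta(Q,E)\le\gamma(Q)\le N$, we get $\gamma(Q)=N$. So every odd $a$ is at least $b^o_Q$, and in particular $a^o_N\ge b^o_Q$.

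\textbf{Suffix sums.} Fix $\ell\le k$ and take $i=Q-\ell-1$, $j=E$. The majorizer chooses $\beta=\beta(i,j)\le\gamma(i)$ odd parts and $n-\ell-1-\beta$ even parts, with
\[
\sum_{m=1}^{i} b^o_m+\sum_{m=1}^{E} b^e_m\le\sum_{m=1}^{\beta}a^o_m+\sum_{m=1}^{n-\ell-1-\beta}a^e_m .
\]
Subtracting this from the total equality gives
\[
\sum_{m=Q-\ell}^{Q} b^o_m\ \ge\ \sum(\text{the }\ell+1\text{ parts of }a\text{ not chosen}).
\]
The unchosen parts are $a^o_{\beta+1},\dots,a^o_N$ together with the $\ell+1-(N-\beta)$ smallest even parts.

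\textbf{Main obstacle.} The difficulty is this mixed complement. We need to show that the $\ell+1$ smallest odd parts sum to at most the left-hand side, i.e.\ that replacing the unchosen even parts by the remaining smallest odd parts $a^o_{N-\ell},\dots,a^o_{\beta}$ does not increase the sum. Equivalently, the chosen set should consist of the largest elements among those eligible.

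I would first use the freedom in choosing $\beta$, since the Definition of majorizing with parity lets us take the $i+j$ largest eligible elements. By the pointwise step, every odd part with index $\le\gamma(i)$ is eligible. Hence when some unchosen even part exceeds a chosen odd part $a^o_m$ with $m\le\beta$, we can swap them without decreasing the right-hand side. The remaining configuration has every unchosen part at most every chosen part.

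If an unchosen even part is still larger than some odd part with index in $(N-\ell-1,N]$ that was chosen, then that odd part is eligible and smaller, and swapping contradicts maximality. Therefore the complement is exactly the $\ell+1$ smallest parts overall. Their sum is at most the sum of the $\ell+1$ smallest odd parts only when the smallest parts are odd, and this is the step I expect to need the count information, namely $\gamma(i)\ge N-\ell-1$.

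I would get that count from the counting clause of the majorizer at $(i,E)$: at least $n-\ell-1$ elements must be even or $\ge b^o_i$, so at most $\ell+1$ odd parts are below $b^o_i$. Combined with the ordering of $a^o$, this fixes which parts can be unchosen and should close the argument.
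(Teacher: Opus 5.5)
There is a genuine gap: your argument targets the inequality in the wrong direction. Your subtraction step is fine. With $(i,j)=(Q-\ell-1,E)$ it gives $\sum_{m=Q-\ell}^{Q} b^o_m \ge \sum(\text{unchosen parts of }a)$. But this bounds parts of $a$ from \emph{above} by the $b^o$-tail. Your ``main obstacle'' then sets out to prove $\sum_{m=N-\ell}^{N} a^o_m \le \sum_{m=Q-\ell}^{Q} b^o_m$. That is the opposite of the reading you announced. It is also the opposite of what the lemma is used for in Lemma~\ref{no_abort}, which needs $\sum_{i=t+1}^{\dim(b^o)} b^o_i \le \sum_{i=\beta(t,r)+1}^{\dim(a^o)} a^o_i$.

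Your target is also false. Take $a=(5,2,2)$ and $b=(3,3,3)$. A majorizer exists: $\beta(0,0)=0$ and $\beta(i,0)=1$ for $i=1,2,3$. At $\ell=0$ your target reads $5=a^o_N\le b^o_Q=3$. Concretely, $\beta(2,0)=1$, the unchosen part is the even $2$, and ``replacing'' it by $a^o_1=5$ increases the sum. The target even contradicts your own pointwise bound $a^o_N\ge b^o_Q$ whenever that bound is strict.

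More structurally, a majorizer's sum inequalities bound prefix sums of $b$ from above. Taking complements can therefore only bound sums of $a$-parts from above by $b$-tails. No refinement of the swapping argument can give the lower bound on the $a^o$-tail that the lemma asserts.

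The missing observation is that the sum inequalities are not needed at all. The counting constraint alone proves the lemma, and you already use it in your first and last paragraphs. Fix $0\le \ell\le k$ and apply your pointwise argument at $(i,j)=(Q-\ell,E)$:
\begin{itemize}
\item Since $n=N+\dim(a^e)=Q+E$, the condition $i+j-\beta(i,j)\le \dim(a^e)$ gives $\beta(Q-\ell,E)\ge N-\ell$.
\item Then $\beta(Q-\ell,E)\le\gamma(Q-\ell)$ gives $\gamma(Q-\ell)\ge N-\ell$, i.e.\ $a^o_{N-\ell}\ge b^o_{Q-\ell}$.
\end{itemize}
This is your remark that ``at most $\ell+1$ odd parts lie below $b^o_{Q-\ell-1}$'', re-indexed. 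So the two tails, aligned from the end, are dominated entry by entry. Summing gives every tail partial-sum inequality, in the correct direction. This is, up to indexing, the paper's one-line proof.
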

\begin{proof} Let $k \geq j \geq 0$. We have
\[
\gamma( \dim(b^o) - k) \geq \beta( \dim(b^o) - j, \dim(b^e) ) \geq \dim(b^o) - j
\]
\end{proof}

\begin{lemma} \label{increasing}
Suppose there exists a majorizer $\beta(i,j)$. Then there exists a majorizer $\beta'(i, j)$ such that
\[
\beta'(i, j) \leq \beta'(i + 1, j) \leq \beta'(i, j) + 1
\]
and
\[
\beta'(i, j) \leq \beta'(i, j + 1)
\]
for all $i, j$.
\end{lemma}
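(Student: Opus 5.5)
The idea is to replace $\beta$ by a canonical optimal choice. For fixed $(i,j)$ put $m=i+j$ and
\[
F_m(\beta)=\sum_{k=1}^{\beta}a^o_k+\sum_{k=1}^{m-\beta}a^e_k .
\]
The majorizer condition at $(i,j)$ says only that some integer $\beta$ in the feasible interval $L(m)\le\beta\le U(i)$ satisfies $F_m(\beta)\ge\sum_{k\le i}b^o_k+\sum_{k\le j}b^e_k$. Here $L(m)=\max(0,m-\dim(a^e))$ and $U(i)=\min(\gamma(i),m,\dim(a^o))$.

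\textbf{Concavity and the greedy maximiser.} The first observation is that $F_m$ is concave in $\beta$, since
\[
F_m(\beta+1)-F_m(\beta)=a^o_{\beta+1}-a^e_{m-\beta},
\]
and this is non-increasing in $\beta$. So $F_m$ is maximised over an interval by clamping its unconstrained maximiser to that interval. The unconstrained maximiser is explicit. Merge $a^o$ and $a^e$ into one non-increasing list, breaking ties in favour of odd entries. Let $\beta^*(m)$ be the number of odd entries among the first $m$ elements of this list. Then $\beta^*$ is non-decreasing in $m$ and $\beta^*(m+1)\le\beta^*(m)+1$.

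\textbf{Smoothing $\gamma$ and defining $\beta'$.} The difficulty is that $\gamma(i)$ is non-decreasing in $i$ (because $b^o$ is non-increasing) but can jump by more than $1$. A plain clamp $\min(\beta^*,\gamma(i))$ could therefore violate $\beta'(i+1,j)\le\beta'(i,j)+1$. I will replace $\gamma$ by the smoothed bound
\[
\gamma'(i)=\min_{i'\ge i}\bigl(\gamma(i')-(i'-i)\bigr).
\]
This $\gamma'$ is non-decreasing and increases by at most $1$ per step. I then set
\[
\beta'(i,j)=\max\Bigl(L(i+j),\ \min\bigl(\beta^*(i+j),\gamma'(i)\bigr)\Bigr).
\]
Each ingredient, viewed as a function of $i$ or of $j$, is non-decreasing with increments at most $1$ in $i$ (and $\gamma'(i)$ does not depend on $j$). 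Since $\min$ and $\max$ preserve these properties, both inequalities required in Lemma \ref{increasing} follow.

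\textbf{Feasibility.} I must check that $L(i+j)\le\gamma'(i)$, so that the clamp is well defined and $\beta'\le\gamma(i)$. This uses the original majorizer at $(i',j)$: from $i'+j-\beta(i',j)\le\dim(a^e)$ and $\beta(i',j)\le\gamma(i')$ we get $\gamma(i')-(i'-i)\ge i+j-\dim(a^e)$. The bounds $\beta'\le i+j$ and $\beta'\le\dim(a^o)$ are handled similarly.

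\textbf{Main obstacle: the inequality survives.} I expect the hard step to be showing that $\beta'$ still satisfies the majorizer inequality. By concavity, $\beta'(i,j)$ maximises $F_{i+j}$ over $[L,\gamma'(i)]$, so only the case $\beta(i,j)>\gamma'(i)$ needs work. In that case pick $i'>i$ with $\gamma(i')<\beta(i,j)+(i'-i)$, and use the majorizer inequality at $(i',j)$. Passing from $(i',j)$ back to $(i,j)$ deletes the odd terms $b^o_{i+1},\dots,b^o_{i'}$ from the left side. On the right side I will delete $i'-i$ of the chosen $a$'s via an exchange argument. The intended comparison is that every odd $a^o_k$ with $k>\gamma(i')$ is smaller than $b^o_{i'}\le b^o_k$ for $k\le i'$. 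So trading excess odd parts beyond $\gamma'(i)$ for the next even parts, or simply dropping them, costs at most $\sum_{k=i+1}^{i'}b^o_k$. I would first try to make this bookkeeping work by choosing $i'$ to be the index attaining the minimum in $\gamma'(i)$, and checking term by term that the removed $a$-values are dominated by the removed $b^o$-values.
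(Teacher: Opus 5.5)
Your smoothing step goes in the wrong direction, and this breaks the argument. Your definition gives $\gamma'(i)=\min\bigl(\gamma(i),\gamma'(i+1)-1\bigr)$, so $\gamma'(i+1)\ge\gamma'(i)+1$. A forward minimum forces increments of \emph{at least} one, not at most one.

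Worse, whenever a majorizer exists, $\gamma'$ collapses. Put $s=\dim(b^o)-\dim(a^o)=\dim(a^e)-\dim(b^e)$. The majorizer at $(i',\dim(b^e))$ gives $\gamma(i')\ge i'-s$, while $\gamma(\dim(b^o))\le\dim(a^o)$. Together these force $\gamma'(i)=i-s$ for every $i$. Your monotonicity conclusions then happen to hold, but $\beta'(i,j)\le\max(0,i-s)$, so no odd part of $a$ may be used while $i\le s$. As a result the majorizer inequality fails.

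Concretely, take $a=(9,2,0)$ and $b=(9,1,1)$. Then $\gamma(1)=\gamma(2)=\gamma(3)=1$, and $\beta(i,0)=\min(i,1)$ is a majorizer. But $\gamma'(1)=-1$, so your formula gives $\beta'(1,0)=0$, and the required inequality becomes $9=b^o_1\le a^e_1=2$. Your claimed $L(i+j)\le\gamma'(i)$ also fails here. So the exchange argument in your last paragraph cannot be completed. Going from $(2,0)$ back to $(1,0)$, you would have to discard the part $9$ of $a$ to compensate for discarding $b^o_2=1$.

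The repair is to smooth backwards: set $\gamma''(0)=0$ and $\gamma''(i)=\min\bigl(\gamma(i),\gamma''(i-1)+1\bigr)$. Equivalently, $\gamma''(i)=\min_{i'\le i}\bigl(\gamma(i')+i-i'\bigr)$, the largest function below $\gamma$ whose increments lie in $\{0,1\}$. Feasibility $L(i+j)\le\gamma''(i)$ follows from the majorizer at $(i',j)$ by the same computation you give.

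The inequality then survives by a short induction on $i$ rather than an exchange:
if $\gamma''(i)=\gamma(i)$, the original $\beta(i,j)$ is already admissible.
Otherwise $\gamma''(i)=\gamma''(i-1)+1$. Suppose $\beta_0\le\gamma''(i-1)$ works at $(i-1,j)$. Then $\beta_0+1\le\gamma''(i)$ works at $(i,j)$. The number of even parts is unchanged, $F_{i+j}(\beta_0+1)=F_{i+j-1}(\beta_0)+a^o_{\beta_0+1}$, and $a^o_{\beta_0+1}\ge a^o_{\gamma(i)}\ge b^o_i$.

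Your concavity and clamping argument then shows that $\max\bigl(L(i+j),\min(\beta^*(i+j),\gamma''(i))\bigr)$ is a majorizer with both monotonicity properties. With this correction, your closed-form route is cleaner than the paper's. The paper fixes an optimal majorizer and builds $\beta'$ greedily row by row in $i$, with steps $0$ or $1$.
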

\begin{proof}
Let us assume that $\beta(i,j)$ is the majorizer which maximizes $ \left ( \sum_{k = 1}^{\beta(i, j)} a^o_k \right ) + \left (\sum_{k = 1}^{ i + j - \beta(i, j) } a^e_k \right )$ and then as a tiebreaker, maximizes $\beta(i, j)$. We first claim that $\beta(i, j)$ is non-decreasing. Suppose towards contradiction it is. Then there exists an $i, j$ such that $\beta(i, j) > \beta(i + 1, j)$ or $\beta(i, j) > \beta(i, j + 1)$. First assume the former is the case. Since $\beta(i, j)$ is a maximizer, we have $a^o_{\beta(i, j)} \geq a^e_{i + j - \beta(i, j) + 1}$. But then using the fact that $a^o$ and $a^e$ are non-increasing,
\[
\sum_{k = 1}^{\beta(i + 1, j ) } a^o_k + \sum_{k = 1}^{i + j + 1 - \beta(i + 1, j) } a^e_k \leq \sum_{k = 1}^{\beta(i, j)} b^o_k + \sum_{k = 1}^{i + j + 1 - \beta(i, j)} b^e_k
\]
which contradicts $\beta(i, j)$ being a maximizer. A similar analysis works for the case where $\beta'(i, j + 1) < \beta'(i, j)$

\medskip
We will now show by induction a required $\beta'(i, j)$ exists with the additional property that $\beta'(i,j) \leq \beta(i, j)$. We do this by induction on $i$. Since $\gamma(0) = 0$, $\beta(0, j) = \beta'(0, j) = 0$. Now given $\beta'(i, j)$ we let $\beta'(i + 1, j) = \beta'(i, j)$ if either $\beta'(i, j) = \gamma(i)$, or $a^e_{\beta'(i, j) + 1} < a^o_{i + j + 1 - \beta'(i, j)}$. Otherwise we let $\beta'(i + 1, j) = \beta'(i ,j ) + 1$. We need to verify that $\beta'(i + 1, j) \leq \beta(i + 1, j)$ and
\[
\left ( \sum_{k = 1}^{i + 1} b^o_k \right ) + \left ( \sum_{k = 1}^j b^e_k \right ) \leq \left ( \sum_{k = 1}^{\beta(i + 1, j)} a^o_k \right ) + \left (\sum_{k = 1}^{ i + j +1 - \beta(i + 1, j) } a^e_k \right )
\]
If the former is the case, then since $\beta(i + 1, j)$ is a maximizer and $\beta'(i, j) \leq \beta(i, j)$, $\beta'(i + 1, j) = \beta(i + 1, j)$ If the later is the case, then $\beta'(i, j) + 1 < \gamma(i) + 1 \leq \gamma(i + 1) + 1$, so $\beta'(i, j) + 1 \leq \gamma(i + 1)$. And
\begin{align*}
\left ( \sum_{k = 1}^{i + 1} b^o_k \right ) + \left ( \sum_{k = 1}^j b^e_k \right ) &\leq b^o_{i +1} + \left ( \sum_{k = 1}^{\beta(i, j)} a^o_k \right ) + \left (\sum_{k = 1}^{ i + j- \beta(i , j) } a^e_k \right )\\
& \leq \left ( \sum_{k = 1}^{\beta(i, j) + 1} a^o_k \right ) + \left (\sum_{k = 1}^{ i + j - \beta(i, j) } a^e_k \right )
\end{align*}
The fact that $\beta'(i, j) \leq \beta'(i, j + 1)$ follows from a simple induction on $i$.
\end{proof}
\begin{lemma} \label{no_abort} Suppose that there exists a majorizer $\beta(i, j)$, then the greedy allocation strategy does not abort.
\end{lemma}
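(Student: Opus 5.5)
We argue by induction on the step $i$ of the greedy allocation strategy, using the majorizer only in the columns $i' = 0$, that is, the conditions
\[
\sum_{k=1}^{j} b^e_k \leq \sum_{k=1}^{j} a^e_k \qquad (0 \leq j \leq \dim(b^e)),
\]
together with $\dim(b^e) \leq \dim(a^e)$. These follow from the definition of a majorizer with $i' = 0$, because $\gamma(0) = 0$ forces $\beta(0,j) = 0$. The plan is to show that at step $i$ a real $t$ with $J_{a^e,\omega^i}(t,t+1) = b^e_i$ exists. Write $S_i = \sum_k \omega^i_k = \dim(a^e) - (i-1)$. Since $\dim(b^e) \leq \dim(a^e)$, we have $S_i \geq 1$, so the function $f(t) = J_{a^e,\omega^i}(t,t+1)$ is defined and continuous on $[0, S_i - 1]$. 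It is non-increasing in $t$ because $a^e$ is non-increasing. By the intermediate value theorem it therefore suffices to show
\[
f(S_i - 1) \leq b^e_i \leq f(0).
\]

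For the upper bound $b^e_i \leq f(0)$ we use Lemma \ref{lemma_glue} with $m = i-1$, applied to the weights before step $i$. Roughly, the first $i-1$ units of original mass have been consumed exactly by $M'_1,\dots,M'_{i-1}$, the remaining mass is everything beyond position $i-1$ in the original layout, and the region $[0,1]$ in the current layout is the original region $[i-1,i]$. More precisely, the total original mass of $a^e$ consumed is $\sum_{k<i} b^e_k$. Lemma \ref{support} shows the supports move rightward, and the glue lemma shows no remaining mass lies to the left of an integer boundary. Together these should give
\[
f(0) \geq \sum_{k=1}^{i} a^e_k - \sum_{k=1}^{i-1} b^e_k \geq b^e_i,
\]
by the $j = i$ majorizer inequality. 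The justification is that the remaining mass in the first unit of the current layout consists of the highest remaining values. Moreover, the greedy choice of minimal $t$ removes mass as far left as possible, so the total value of the first $i$ units of original mass not yet used is at least $\sum_{k\le i} a^e_k - \sum_{k<i} b^e_k$. I would make this precise with an exchange argument: among all ways of removing unit masses with prescribed averages $b^e_1 \geq \dots \geq b^e_{i-1}$, the leftmost-greedy one maximizes the value of the leftmost remaining unit.

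For the lower bound $f(S_i-1) \leq b^e_i$: the last unit of the current layout consists of original mass from the last positions of $a^e$. If Lemma \ref{lemma_glue} gives that the tail beyond position $i-1$ is untouched, this mass is $a^e_{\dim(a^e)}$, averaged with possibly something even smaller. Every entry of $a^e$ in that last unit is at most the minimum over the last $\dim(a^e)-i+1$ entries, hence at most the average of any unit block. I would compare with $b^e_i$ by a counting argument. Suppose instead $b^e_i < f(S_i-1)$, so every remaining unit block exceeds $b^e_i$. Since $b^e$ is non-increasing, every earlier $b^e_k \geq b^e_i$, and the greedy choices also took minimal $t$. Then the mass used so far lies entirely at values above $b^e_i$, and the remaining mass is also above $b^e_i$, which contradicts nothing by itself. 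So this direction needs more care. The right framing is probably that if $f$ never reaches $b^e_i$ from above, then $t$ can be taken at the right end of the support and one extends using the leftover weight. Most likely the lower bound is automatic because $a^e$ contains small entries, or the abort in this case cannot occur due to the ordering.

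The main obstacle is the upper bound: rigorously identifying $f(0)$ with $\sum_{k\le i} a^e_k - \sum_{k<i} b^e_k$ (or bounding it below by that). This requires carefully combining Lemma \ref{support} and Lemma \ref{lemma_glue} so that the previously allocated mass is exactly the leftmost $i-1$ units of mass, up to rearrangement within blocks that does not increase the leftmost remaining value. I would attempt it by tracking the cumulative function $x \mapsto J_{a^e,\omega^1}(0,x)$ and showing inductively that the consumed set is an initial-segment-like union whose total value is $\sum_{k<i} b^e_k$.
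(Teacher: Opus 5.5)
There is a genuine gap, and it is in the direction you hoped was automatic. You propose to use only the column $i'=0$ conditions $\sum_{k\le j}b^e_k\le\sum_{k\le j}a^e_k$ together with $\dim(b^e)\le\dim(a^e)$. These cannot rule out an abort in which every remaining unit is too large.

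Take $a=(2,2,2)$, $b=(3,3,0)$. Then $a^e=(2,2,2)$ and $b^e=(0)$, and both of your conditions hold. Yet $J_{a^e,\omega^1}(t,t+1)=2\ne 0$ for every $t$, so the strategy aborts at step $1$. No majorizer exists here: since $\gamma(1)=0$, the condition at $(i,j)=(1,0)$ would require $3\le 2$. So the bound $f(S_i-1)\le b^e_i$ is exactly where the odd columns of the majorizer and the identity $\sum_k a_k=\sum_k b_k$ must enter.

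The paper handles this as follows. Put $s=\dim(a^e)-\dim(b^e)=\dim(b^o)-\dim(a^o)$. Let $r$ be the number of earlier rows $M'_j$ supported in the first $s$ units of the current layout, so that Lemma \ref{lemma_glue} gives $J_{a^e,\omega^k}(0,s)=\sum_{i\le s+r}a^e_i-\sum_{i\le r}b^e_i$. Choose $t$ with $\beta(t,r)=t-s$; it exists by the monotonicity in Lemma \ref{increasing}. Apply the majorizer inequality at $(t,r)$ and subtract it from the total-sum identity. The leftover odd tails satisfy $\sum_{i>t}b^o_i-\sum_{i>t-s}a^o_i\le 0$ by the bottom-aligned domination of Lemma \ref{major_suffix}. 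This yields $J_{a^e,\omega^k}(s,|\omega^k|)\le\sum_{i\ge k}b^e_i$. Since $a^e$ is non-increasing, the last unit is at most the average over the $\dim(b^e)-k+1$ units of $[s,|\omega^k|]$, hence at most $b^e_k$.

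Your upper bound also needs repair, because the displayed inequality $f(0)\ge\sum_{k\le i}a^e_k-\sum_{k<i}b^e_k$ is false in general. Take $a=(10,10,10,2,2)$, $b=(8,8,6,6,6)$; these are all even and ordinarily majorized, so a majorizer exists. The first two greedy steps give $\omega^3=(1,\tfrac12,0,\tfrac12,1)$, so $f(0)=10$, while the right-hand side is $30-16=14$. The unused part of the first $i$ original units can have measure larger than $1$ (here $1.5$), so its value does not bound the value of the first remaining unit.

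The correct argument is the dichotomy supplied by Lemma \ref{lemma_glue} with $m=1$:
either some earlier $M'_j$ is supported entirely to the right of $I_{a^e,\omega^i}(1)$, and then $f(0)\ge b^e_j\ge b^e_i$ because $a^e$ is non-increasing;
or all earlier rows are supported to the left, and then $f(0)$ equals $\sum_{k\le i}a^e_k-\sum_{k<i}b^e_k$ exactly, and the $i'=0$ condition finishes.
No exchange argument is needed, and this is the routine half of the proof, not the main obstacle.
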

\begin{proof} Recall that the greedy allocation strategy aborts iff there is some step $k$ in which we cannot find a $t$ for which $J_{a^e, \omega ^k } (t, t + 1) = b^e_k$. By the intermediate value theorem, this only happens in one of the following three cases
\begin{enumerate}
\item $|\omega^k|< 1$
\item $J_{a^e, \omega^k} (0, 1) < b^e_k$
\item $J_{a^e, \omega^k } ( |\omega^k| - 1, |\omega^k| ) > b^e_k$
\end{enumerate}
We will show none of these can happen. First we show $|\omega^k | \geq 1$. We have $|w^i| = \dim(b^e) - i$ and plugging in $i = 0, j = \dim(b^o)$ to the condition that $i + j \leq \gamma(i) + \dim(a^e)$ gives $\dim(b^o) \leq \dim(b^e)$.
\medskip
Next let us show that $J_{a^e, \omega^k} (0, 1) \geq b^e_k$. By lemma \ref{lemma_glue}, for every $j \leq k - 1$, the support of $M'_j$ is completely contained in either $\{1, \cdots , I_{a^e, \omega^k} ( 1) \}$ or $\{ I_{a^e, \omega^k} (1) +1, \cdots, \dim(b^e) \}$. If the later is the case then
\[
J_{a^e, \omega ^k} (0, 1) \geq \sum_{i} c_i M'_{j,i} = b^e_j \geq b^e_k
\]
If the former is the case for every $1\leq j \leq k - 1$, then
\[
J_{a^e, \omega^k} (0, 1) + \sum_{i = 1}^{k - 1} b^e_k = J_{a^e, \omega^k} (0, 1) + \sum_{j = 1}^{k - 1} \sum_i c_i M'_{j, i} = J_{a^e, \omega^1} (0, k) = \sum_{i = 1}^k a^e_i
\]
Rearranging give
\[
J_{a^e, \omega^k}(0, 1) = \sum_{i = 1}^{k} a^e_i - \sum_{i = 1}^{k - 1} b^e_i \geq b^e_k
\]
Finally we will show $J_{a^e, \omega^k } ( |\omega^k| - 1, |\omega^k| ) \leq b^e_k$. Note that $s := \dim(b^o) - \dim(a^o) = \dim(a^e) - \dim(b^e) \geq 0$. Let $r$ be the number of $M'_i$ whose support is contained in $\{1, \cdots, I_{a^e, \omega^k} (s) \}$. Let $t$ be the maximum non-negative integer such that $\beta(t, r) = (t + r) - (r + s) = t - s$. By Lemma \ref{increasing} we may assume $\beta(i, j) \leq \beta(i + 1, j) \leq \beta(i, j) + 1$ and $\beta(i, j) \leq \beta(i, j + 1)$. Note that $\beta(\dim(b^o), r) \leq \dim(a^o) =\dim(b^o) - s$ and $\beta(0, r) = r \geq -s$ so such a $t$ must exist by the discrete intemediate value theorem and Lemma \ref{no_abort}. By lemma \ref{lemma_glue},
\begin{align}
\sum_{i = 1}^{t} b^o_i + \sum_{i = 1}^{r} b^e_i & \leq \sum_{i = 1}^{ \beta(t, r) } a^o_i + \sum_{i = 1}^{s + r} a^e_s\\
& = \sum_{i = 1}^{\beta(t, r)} a^o_i + J_{a^e, \omega^k} (0, s) + \sum_{i = 1}^r b^e_i \\
&= \sum_{i = 1}^{\beta(t, r)} a^o_i + \sum_{i = 1}^{\dim(a^e)} a^e_i - J_{a^e, \omega^k} (s, |\omega^k| ) - \sum_{i = r + 1}^{k - 1} b^e_i\\
& = \sum_{i = 1}^{\dim(b^o)} b^o_i + \sum_{i = 1}^{\dim(b^o)} b^e_i - \sum_{i = \beta(t, r) + 1}^{\dim(a^o)} a^o_i - J_{a^e, \omega^k} (s, |\omega^k| ) - \sum_{i = r + 1}^{k - 1} b^e_i
\end{align}
Rearranging gives
\[
J_{a^e, \omega^k} (s, |\omega^k| ) \leq \left ( \sum_{i = r + 1}^{\dim (b^e)} b^e_i - \sum_{i = r + 1}^{k - 1} b^e_i \right ) + \left ( \sum_{i = t + 1}^{\dim(b^o)} b^o_i - \sum_{i = \beta(t, r) + 1}^{\dim(a^o)} a^o_i \right ) \leq \sum_{i = k }^{\dim(b^e)} b^e_i
\]
In the last step we are using Lemma \ref{major_suffix} to deduce that $ \sum_{i = t + 1}^{\dim(b^o)} b^o_i - \sum_{i = \beta(t, r) + 1}^{\dim(a^o)} a^o_i \leq 0$. Using this we get,
\[
J_{a^e, \omega^k} (|\omega^k| - 1, \omega^k )\leq \frac{1}{\dim(b^e) - k + 1} J_{a^e, \omega^k} (s, |\omega^k| ) \leq \frac{1}{\dim(b^e) - k + 1} \sum_{i = k }^{\dim(b^e)} b^e_i \leq b^e_k
\]
\end{proof}
\definition For $a, b \in \mathbb{Z}^n$, we say that a suballocation $M'$ of $a^e$ onto $b^e$ extends to an allocation if there exists an allocation of $(a^e, a^o)$ onto $(b^e, b^o)$ of the form
\[
\left[
\begin{array}{c|c}
M' & 0 \\
\hline
\multicolumn{2}{c}{M''}
\end{array}
\right]
\]
where $M''$ is a $\dim(b^o) \times n$ matrix.
\begin{lemma}\label{extends} Let $a, b \in \mathbb{Z}^n$. Suppose that $M'$ is a suballocation on $b^e$ and for $1 \leq i \leq \dim(a^e)$, let
\[
\omega_i = \sum_{j = 1}^{\dim(b^e)} M'_{j, i}
\]
Then $M'$ extends to an allocation if for every $1 \leq i \leq \dim(b^o)$, there exists an integer $\beta(i) \leq \gamma(i)$ such that
\[
a^o_1 + \cdots + a^o_{\beta(i)} + J_{a^e, w} (0, i - \beta(i)) \geq b^o_1 + \cdots + b^o_i
\]
\end{lemma}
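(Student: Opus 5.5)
We need a matrix $M''$, of size $\dim(b^o) \times n$, whose rows are indexed by the odd targets $b^o_1 \geq \cdots \geq b^o_{\dim(b^o)}$. Together with $M'$ it must give an allocation. Row $i$ of $M''$ may put weight on any even part $a^e_j$, using only the residual capacity $\omega'_j := 1 - \omega_j$ left over by $M'$. It may also put weight on the odd parts $a^o_1,\dots,a^o_{\gamma(i)}$. Each row must have total weight $1$ and weighted sum exactly $b^o_i$, and all column sums must become $1$. (Here I read $J_{a^e,w}$ in the statement as the integral against these residual weights.) So the task is a transportation problem: a bipartite flow from supply nodes (the odd parts, each with capacity $1$, and the even parts, with capacities $\omega'_j$) to demand nodes $b^o_i$, each with demand $1$. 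There are the support restrictions $a^o_k \to b^o_i$ only if $k \leq \gamma(i)$, plus the linear constraint on the value received by each demand node. Total supply equals total demand, since $\dim(a^o) + |\omega'| = \dim(a^o) + \dim(a^e) - \dim(b^e) = \dim(b^o)$. Total value also matches, because $\sum a = \sum b$ and $M'a^e = b^e$.

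I would build $M''$ greedily, in the spirit of the paper's second allocation step, and process $b^o_1, b^o_2, \dots$ in order. First form the merged supply list: the odd parts $a^o_k$ with weight $1$, together with the even parts $a^e_j$ with weight $\omega'_j$. Then run a ``water-filling'' step. To row $i$, assign the largest still-unused mass from the set of allowed supplies (the odd $a^o_k$ with $k \le \gamma(i)$ together with all remaining even residual mass), taking total weight $1$. This may overshoot, so that the value given to row $i$ is at least $b^o_i$. Because each row takes exactly unit mass, the first $i$ rows together take the top $i$ units of allowed mass. Since $\gamma$ is non-increasing in $i$, the sets of allowed supplies shrink as $i$ grows, and mass is consumed in a nested way.

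The key claim is this: for every $i$, the top $i$ units used by rows $1..i$ have total value at least $\max_{\beta\le\gamma(i)}\bigl(a^o_1+\cdots+a^o_\beta + J(0,i-\beta)\bigr)$. By the hypothesis, this quantity is at least $b^o_1+\cdots+b^o_i$. The claim holds because the greedy choice realizes exactly this maximum over feasible mixtures. Here Lemma \ref{increasing}-type monotonicity, with $\beta(i)$ allowed to be non-decreasing, shows that prefix choices are consistent across $i$. Once these prefix inequalities hold, together with equality of the totals at the end, I correct the overshoot. I replace the greedy rows by a row-stochastic matrix whose row values are exactly $b^o_i$. This uses the standard majorization fact (Hardy--Littlewood--P\'olya / Birkhoff): if a vector $x$ majorizes $y$ (prefix sums of decreasingly sorted values dominate, with equal totals), then $y = Dx$ for a doubly stochastic $D$ that averages only ``downward''. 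Composing $D$ with the greedy assignment moves mass only from a row $i$ to rows $i' > i$. Since $\gamma(i') \le \gamma(i)$, such a transfer could carry an odd part $a^o_k$ with $\gamma(i') < k \le \gamma(i)$ to a row where it is forbidden, so this is not automatically fine. To avoid this, I would perform the averaging only among rows sharing a common $\gamma$-block, or route the surplus through even mass.

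I expect this correction step to be the main obstacle: making the rows exact while respecting the odd-part restriction $M_{i,k}=0$ for $a^o_k<b^o_i$. What I would try first is the following. Observe that any odd part $a^o_k$ used by row $i$ satisfies $a^o_k \ge b^o_i \ge b^o_{i'}$, so an overshooting row $i$ can instead hand its smallest allowed masses down to row $i'$. Alternatively, apply the max-flow/min-cut (Hall-type) criterion for the transportation polytope directly. In that formulation, each cut corresponds to a prefix $\{1,\dots,i\}$ of odd demands together with a choice $\beta$ of odd suppliers, and the cut condition becomes precisely the stated inequality. By Lemma \ref{allocation_implies_rational_allocation}, a real solution suffices.
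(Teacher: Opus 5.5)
\textbf{There is a genuine gap.} It begins with a monotonicity error: $\gamma$ is non-decreasing, not non-increasing. Since $b^o_1\ge b^o_2\ge\cdots$, more odd parts satisfy $a^o_k\ge b^o_i$ as $i$ grows; the paper uses $\gamma(i)\le\gamma(i+1)$ in Lemma \ref{increasing}. So the allowed sets grow with $i$.

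Because of this, your key claim fails even when the hypothesis holds. You claim that rows $1,\dots,i$ of your greedy have total value at least $\max_{\beta\le\gamma(i)}\bigl(a^o_1+\cdots+a^o_\beta+J(0,i-\beta)\bigr)$. But that maximum can be attained by a configuration that no assignment of rows $1,\dots,i$ can realize, since early rows are barred from odd parts that unlock only later. For example, take $a=(16,2,0,0,7,7,7)$ and $b=(9,9,7,5,5,3,1)$, so $b^e$ is empty and $M'$ is trivial. The hypothesis holds for every $i$, with $\gamma(1)=\gamma(2)=0$ and $\gamma(3)=3$. Your greedy gives the first three rows values $16,2,7$, totalling $25$. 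The maximum at $i=3$ is $7+7+16=30$ (at $\beta=2$), which needs two odd parts among rows $1,2,3$ when only row $3$ may hold one.

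What you actually need is only the prefix bound $b^o_1+\cdots+b^o_i$. To get it, you must normalize $\beta$ so that $\beta(i-1)\le\beta(i)\le\beta(i-1)+1$. Then $\beta(i)-\gamma(i')\le i-i'$ for all $i'<i$, so the comparison configuration is realizable by rows $1,\dots,i$. This is what the paper invokes Lemma \ref{increasing} for; being non-decreasing alone is not enough.

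\textbf{The correction step is not carried out, and your proposed mechanism cannot work as stated.} Your $D$ is doubly stochastic. A doubly stochastic matrix that never moves content of a later row into an earlier one is lower triangular, hence the identity. So any nontrivial $D$ moves part of some row $i'$ into a row $i<i'$. That is the dangerous direction: it can place an $a^o_k$ with $\gamma(i)<k\le\gamma(i')$ into row $i$. Moving content from row $i$ to a row $i'>i$ is always harmless, contrary to your worry.

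The missing idea is an asymmetric exchange:
\begin{itemize}
\item An undershooting row $i'$ must contain even mass of value $<b^o_{i'}$, because every odd part it may hold is $\ge b^o_{i'}$.
\item An overshooting earlier row $i$ contains mass of value $>b^o_i\ge b^o_{i'}$.
\item Swapping equal amounts of these two pieces sends only even mass upward.
\item Doing this between the first deviating row and the next deficient row preserves prefix domination.
\end{itemize}

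The paper builds exactly this into its row-by-row construction. Row $i$ gets an unlocked odd part if one is available, otherwise the next unit of residual even mass. If that falls short, row $i$ trades its even mass for pieces of earlier overshooting rows. If all earlier rows become exact, rows $1,\dots,i$ have total value precisely $a^o_1+\cdots+a^o_{\beta(i)}+J(0,i-\beta(i))$, and the hypothesis forces row $i$ up to $b^o_i$.

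Your min-cut alternative does not apply directly either. The row condition $\sum_k M''_{i,k}a_k=b^o_i$ is a value constraint, not a flow constraint, so the cut condition is not simply the stated inequality.
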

\begin{proof}
We first note that by Lemma \ref{increasing}, we may assume $\beta(i)$ is non-decreasing. We will perform the following algorithm to construct a matrix $M'' \in \mathbb{R}^{\dim(b^o) \times n}$ such that
\[
\left[
\begin{array}{c|c}
M' & 0 \\
\hline
\multicolumn{2}{c}{M''}
\end{array}
\right]
\]
is an allocation of $(a^e, a^o)$ onto $(b^e, b^o)$ . We will initialize a matrix $ M'' = \boldsymbol{0} \in \mathbb{R}^{\dim(b^o) \times n}$. In the ith step of our algorithm, we will ensure that
\begin{enumerate}
\item $M''_{j, k} = 0$ for $j > i$.
\item $M \boldsymbol{1} = (1, \cdots, 1, 0, \cdots 0)$ where there are $i$ $1$'s.
\item $\boldsymbol{1}^T M \leq ( 1- \omega_1, \cdots, 1 - \omega_n)$
\item $M'' a \geq (b^o_1, \cdots b^o_i, 0 \cdots, 0)$
\end{enumerate}
We will keep track of a set $B$ and an integer $m$. We will aditionally maintain that $M''_{\ell,j} = 0$ for all $\ell$ and $j \in B$. $B$ is initialized to $ \emptyset$ and $m$ is initialized to $0$. In the ith step of our algorithm, we do the following. Start by adding all integers in the interval $ [\beta(i - 1) + 1, \beta(i) ]$ to $B$. Then if $B$ is non-empty, remove some $j \in B$ and set $M''_{i, j + \dim(a^e)} = 1$ and continue to step $i + 1$. Let us verify that $1-4$ are all satisifed. Clearly $1$ and $2$ are satisfied. Since $a^o_{j} \geq b^o_i$, we have $M'' a \geq ( b^o_1, \cdots, b^o_i, 0, \cdots, 0)$ so $4$ is satisifed. Since $M_{i, j}$ is the only non-zero entry in the $j$th column, we maintain $ \boldsymbol{1}^T M \leq (1 - \omega_1, \cdots, 1 - \omega_n)$ so $3$ is satisfied.
\medskip
If, on the other hand, $B$ is empty, we will set
\[
M_{i, j} = H_j (m, m + 1)
\]
for all $j \in [1, \dim(a^e)]$ and update $m$ to $m + 1$. If $\sum_{j = 1}^n M_{i, j} a_j \geq b_j$, then $1-4$ are satisfied and we continue to step $i + 1$. Otherwise initialize $v = \bf{0}$ and for $j = 1, \cdots, i - 1$, we do the following: let $t$ be the smallest positive real number such that either
\[
\sum_{k = 1}^n [(1 - t) M_{i, k} + v_k + (1 - |v|)t M_{j, k} ] a_k = b_i
\]
or
\[
\sum_{k = 1}^n [ t M_{i, k} + (1 - t + t|v| ) M_{j, k} ] a_k = b_j
\]
We note that such a $t$ must exist since when $t = 1$
\[
\sum_{k = 1}^n [ (1 - t) M_{i, j} + v_k + (1 - t + t|v|) M_{j k} ] a_k = \sum_{k = 1}^n \left ( \sum_{s = 1}^j \lambda_s M_{s, k} \right ) a_{j, k}
\]
for some $\lambda_1, \cdots, \lambda_j$ such that $\lambda_k \geq 0$ for all $k$, $\lambda_1 + \cdots + \lambda_j = 1$ and,
\[
\sum_{k = 1}^n \left ( \sum_{s = 1}^j \lambda_s M_{s, k} \right ) a_{j, k} \geq \sum_{s = 1}^j \lambda_s b^o_s \geq b^o_i
\]
After this, we update $M_{j, k}$ to $t M_{i, k} + (1 - t + t|v|) M_{j_k}$, update $M_{i, k}$ to $(1 - t) M_{i, k}$ and update $v_k$ to $ v_k + (1 - |v|) t M_{j, k}$.
\medskip
At the end of the process, update $M_{i, k}$ to $M_{i, k}+ v_k$. At this point either
\[
\sum_{k = 1}^n M_{i, k} a_k = b_i
\]
or
\[
\sum_{k = 1}^n M_{j, k} a_k = b_j
\]
for all $1 \leq j \leq i - 1$. If the first is the case $1-4$ are satisfied. If the second is the case then
\[
a^o_1 + \cdots + a^o_{\beta(i)} + J_{a^e, w}(0, i - \beta(i) ) = \sum_{j = 1}^i \sum_{k = 1}^n M_{j, k} a_k = b^o_1 + \cdots b^o_{i-1} + \sum_{k = 1}^n M_{i, k} a_k
\]
So
\[
\sum_{k = 1}^n M_{i, k} a_k \geq b^o_{i - 1}
\]
and $1-4$ are satisfied as well.
\end{proof}

\begin{lemma} \label{lemma_step} Suppose that $a$ majorizes $b$ with parity. Define $\beta(i)$ inductivly as follows. Let $\beta(0) = 0$, then for $i \geq 1$, 
\[
\beta(i) = \begin{cases} 1 + \beta(i - 1) & \beta(i - 1) < \gamma(i) \\  \beta(i - 1) & \beta(i - 1) = \gamma(i) \end{cases}
\]
Then there exists an integer $r$ such that $J_{a^e, w} (0, i - \beta(i) ) = (a^e_1 + \cdots a^e_r) - (b^e_1 + \cdots + b^e_{r - i + \beta(i)})$ and
\[
\left ( \sum_{k = 1}^{\beta(i)} a^o_{k} \right ) + \left ( \sum_{k = 1}^{r} a^e_k \right )  \geq  \left ( \sum_{k = 1}^i b^o_i \right ) + \left ( \sum_{k = 1}^{r - i + \beta(i) } b^e_k \right )
\]
\end{lemma}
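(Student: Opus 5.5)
Throughout, I read $w$ as the vector of weights of $a^e$ left unused by the greedy allocation strategy, i.e.\ $w=\omega^{\dim(b^e)+1}$; this is the only reading under which the claimed identity can hold. This run is legitimate by Lemma \ref{no_abort}, since majorizing with parity gives a majorizer.

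\textbf{First claim.} Set $m=i-\beta(i)$. I first check that $m$ lies in $[0,|w|]$: indeed $\beta(i)\le i$ by construction, and $i-\beta(i)\le \dim(a^e)-\dim(b^e)=|w|$ follows from the majorizer condition $i+j-\beta(i,j)\le\dim(a^e)$. Then I apply Lemma \ref{lemma_glue} at the last step, with $r:=I_{a^e,w}(m)$. It gives $m=w_1+\cdots+w_r$, and every row $M'_j$ has support entirely inside $\{1,\dots,r\}$ or entirely inside $\{r+1,\dots\}$. Let $c$ be the number of rows of the first kind. Counting the total weight of the first $r$ columns of $a^e$, each of which has weight $1$ initially, gives $r=m+c$, so $c=r-i+\beta(i)$. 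By Lemma \ref{support}(1), $\max\Supp(M'_j)$ is non-decreasing in $j$, so the rows of the first kind are exactly $M'_1,\dots,M'_c$. Hence
\[
J_{a^e,w}(0,m)=\sum_{k\le r}a^e_k-\sum_{j\le c}\sum_k a^e_k M'_{j,k}=\sum_{k\le r}a^e_k-\sum_{k\le c}b^e_k ,
\]
which is the first assertion.

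\textbf{Second claim.} Using $i+c-\beta(i)=r$, the inequality to prove is the majorizer inequality at the pair $(i,c)$, but with $\beta(i,c)$ replaced by the greedy count $\beta(i)$. By Lemma \ref{increasing}, I take a majorizer $\beta'$ with $\beta'(i+1,j)\le\beta'(i,j)+1$ and $\beta'(i,j)\le\gamma(i)$. Since $\beta(\cdot)$ increments whenever $\gamma$ permits, an induction on $i$ shows $\beta'(i,c)\le\beta(i)$. It then remains to show that trading the terms $a^e_{r+1},\dots,a^e_{r+\beta(i)-\beta'(i,c)}$ for $a^o_{\beta'(i,c)+1},\dots,a^o_{\beta(i)}$ does not decrease the right-hand side. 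The constraint on the $b^e$-indices is still fine, since $c\ge0$ and $r\le\dim(a^e)$.

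\textbf{Main obstacle.} The exchange in the previous paragraph is the hard step, because an odd $a^o_k$ counted by $\beta(i)$ need not dominate the even part it replaces. My first attempt is to argue index by index: each extra odd index $k\le\beta(i)\le\gamma(i)$ satisfies $a^o_k\ge b^o_{k'}$ for a corresponding $k'\le i$. I then split the majorizer inequality at $(i,c)$ into the part with $i$ replaced by the last index $i_0$ where $\beta(i_0)=\beta'(i_0,c)$, which holds directly, plus the terms $b^o_{i_0+1},\dots,b^o_i$. Each of these is bounded by a newly added $a^o_k$, or else by an even term when $\beta$ did not increment. If this pointwise bookkeeping fails, my fallback is to choose $\beta'$ as the maximizing majorizer in the proof of Lemma \ref{increasing}, whose tie-break already prefers more odd terms. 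From that choice I would try to show $\beta'(i,c)=\beta(i)$ directly, so that no exchange is needed.
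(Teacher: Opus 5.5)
Your first claim is fine, and more explicit than the paper's one-line appeal to Lemma~\ref{lemma_glue}. The second claim, however, has a genuine gap. The exchange you flag as the main obstacle is never carried out, and neither of your two routes works as stated.

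Take $a=(6,4,2,0,3)$ and $b=(3,3,3,3,3)$, which majorizes with parity. Here $b^e$ is empty, $w=(1,1,1,1)$, $\gamma(i)=\beta(i)=1$ for $i\ge1$, $r=i-1$ and $c=0$.

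\textbf{First route.} The minimal majorizer ($\beta'(k,0)=0$ for $k\le4$, $\beta'(5,0)=1$) has every property you take from Lemma~\ref{increasing}. At $i=4$ your $i_0$ is therefore $0$, and the bookkeeping must match $b^o_1,\dots,b^o_4$ (all equal to $3$) against $a^o_1,a^e_1,a^e_2,a^e_3=3,6,4,2$. It fails at $a^e_3=2<3$, even though the target inequality $15\ge12$ holds. Nothing about a generic majorizer forces the even term used at a non-increment step to dominate $b^o_k$.

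\textbf{Fallback.} It is simply false. At $i=2$ the sum-maximizing value at $(2,0)$ is $s=0$, since $6+4=10>3+6=9$. So $\beta'(2,0)=0\neq\beta(2)=1$; the maximizer drops odd terms as soon as $a^e_{r+1}>a^o_{\beta(i)}$.

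\textbf{What is missing} is a reason why the trade is legitimate. The paper obtains it by induction on $i$, using the greedy leftover weights.
\begin{itemize}
\item At an increment step, $i-\beta(i)$ (hence $r$) is unchanged, so the inequality for $i-1$ plus $a^o_{\beta(i)}\ge b^o_i$ suffices.
\item At a non-increment step ($\beta(i)=\gamma(i)$), compare the newest unit of leftover weight, $J_{a^e,w}(i-\beta(i)-1,i-\beta(i))$, with $b^o_i$. By your first claim this unit equals $\sum_{r'<k\le r}a^e_k-\sum_{c'<k\le c}b^e_k$, where $(r',c')$ is the pair for $i-1$.
\item If this unit is at least $b^o_i$, the inequality for $i-1$ extends directly.
\item If it is smaller, every $a^e_k$ with $k>r$ is at most this average of entries with indices $\le r$. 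Hence $a^e_k<b^o_i\le a^o_t$ for all $t\le\beta(i)=\gamma(i)$. Only in this case is your trade against the majorization inequality at $(i,c)$ valid, with some $s\le\gamma(i)=\beta(i)$.
\end{itemize}

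\textbf{An alternative repair.} Your bookkeeping can also be rescued without the greedy structure. Let $\beta'(k,c)$ maximize $\sum_{t\le s}a^o_t+\sum_{t\le k+c-s}a^e_t$ over admissible $s\le\beta(k)$. A valid such $s$ exists: at increment steps, add $a^o_{s+1}\ge b^o_k$ to a valid value for $k-1$; at non-increment steps, every majorizer value is $\le\gamma(k)=\beta(k)$. If $\beta'(k,c)<\beta(k)$, comparing with $s=\beta'(k,c)+1$ gives $a^e_{k+c-\beta'(k,c)}\ge a^o_{\beta'(k,c)+1}\ge b^o_k$. The even index used at a non-increment step $k>i_0$ is $k+c-\beta(k)<k+c-\beta'(k,c)$, so this is exactly the domination your bookkeeping needs.
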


\begin{proof} The fact that there exists an $r$ such that $J_{a^e, w} (0, i - \beta(i) ) = (a^e_1 + \cdots a^e_r) - (b^e_1 + \cdots + b^e_{r - i + \beta(i)})$ follows from Lemma \ref{lemma_glue}. For the second part we induct on $i$. Clearly it holds for $i = 0$. Now suppose it holds for $i - 1$. If $\beta(i) = 1 + \beta(i - 1)$, then since $\beta(i) \leq \gamma(i)$, $a^o_{\beta(i)} \geq b^o_i$ and so
\[
\left ( \sum_{k = 1}^{\beta(i)} a^o_{k} \right ) + \left ( \sum_{k = 1}^{r} a^e_k \right )  \geq a^o_{\beta(i) } + \left ( \sum_{k = 1}^{i - 1} b^o_i \right ) + \left ( \sum_{k = 1}^{r - i + \beta(i) } b^e_k \right )  \geq \left ( \sum_{k = 1}^i b^o_i \right ) + \left ( \sum_{k = 1}^{r - i + \beta(i) } b^e_k \right )
\]
Now suppose on the other hand that $\beta(i) = \beta(i - 1)$. In this case $\beta(i) = \gamma(i)$. Let $r'$ be the integer such that $J_{a^e, w} (0, i  - 1 - \beta(i) ) = (a^e_1 + \cdots a^e_{r'}) - (b^e_1 + \cdots + b^e_{r' - i + \beta(i)})$. If $J_{a^e, w} (i - \beta(i) - 1, i - \beta(i) ) \geq b^o_i$, then
\begin{align*}
\left ( \sum_{k = 1}^{\beta(i)} a^o_{k} \right ) + \left ( \sum_{k = 1}^{r} a^e_k \right )  
&=   \left ( \sum_{k = 1}^{\beta(i - 1)} a^o_{k} \right ) + \left ( \sum_{k = 1}^{r'} a^e_k \right )  + J_{a^e, w} (i - \beta(i) - 1, i - \beta(i) ) + \left ( \sum_{k = r' - i + \beta(i) }^{r - i + \beta(i) } b^e_k \right ) \\
& \geq  \left ( \sum_{k = 1}^i b^o_k \right ) + \left ( \sum_{k = 1}^{r - i + \beta(i) } b^e_k \right )
\end{align*}
So assume $J_{a^e, w} (i - \beta(i) - 1, i - \beta(i) ) < b^o_i$.
By assumption there exists an integer $s \leq \gamma(i)$ such that
\[
 \left ( \sum_{k = 1}^{s}  a^o_k  \right )  + \left ( \sum_{k = 1}^{r + \beta(i) - s}  a^e_k \right )  \geq  \left ( \sum_{k = 1}^i b^o_i \right ) + \left ( \sum_{k = 1}^{r - i + \beta(i) } b^e_k \right )
\]
but for $k > r$ and $t \leq \beta(i)$, by Lemma \ref{lemma_glue}, $a^e_k \leq J_{a^e, w} (i - \beta(i) - 1, i -\beta(i) ) < b^o_i \leq a^o_t$. Therefore,
\[
\left ( \sum_{k = 1}^{\beta(i)} a^o_{k} \right ) + \left ( \sum_{k = 1}^{r} a^e_k \right )  \geq  \left ( \sum_{k = 1}^{s}  a^o_k  \right )  + \left ( \sum_{k = 1}^{r + \beta(i) - s}  a^e_k \right )  \geq  \left ( \sum_{k = 1}^i b^o_i \right ) + \left ( \sum_{k = 1}^{r - i + \beta(i) } b^e_k \right )
\]

\end{proof}

\begin{lemma} \label{other_condition} Suppose there are at least $(i + j)$ elements from $a_1, \cdots, a_n$ that are either even or greater than or equal to $b^o_i$ and, letting $a'_1, \cdots, a'_{i + j}$ be the $(i + j)$ largest such elements (counted with multiplicity),
\[
\left ( \sum_{k = 1}^i b^o_k \right ) + \left ( \sum_{k = 1}^j b^e_k \right ) \leq \sum_{k = 1}^{i + j} a'_{k}
\]
Then there exists an allocation of $a$ onto $b$.
\end{lemma}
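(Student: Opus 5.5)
The plan is to assemble the machinery already developed in this section. The hypothesis, read for all $i \in [0,\dim(b^o)]$ and $j\in[0,\dim(b^e)]$, is exactly that $a$ majorizes $b$ with parity. So a majorizer $\beta(i,j)$ exists, namely the number of odd elements among the $a'_k$. By Lemma \ref{increasing} we may take this majorizer to be monotone in both arguments. The allocation will be built in two stages.

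\textbf{Even part.} Run the greedy allocation strategy of $a^e$ onto $b^e$. Lemma \ref{no_abort} guarantees that it never aborts, so it produces a sub-allocation $M'$. Put $\omega_i = \sum_j M'_{j,i}$; the leftover weights are $1-\omega_i$, and in the notation of Lemma \ref{extends} $J_{a^e,w}$ measures the mass of $a^e$ still unused. By construction $M'$ satisfies the parity constraint of an allocation, since only even entries of $a$ are placed onto even entries of $b$.

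\textbf{Odd part.} By Lemma \ref{extends}, it suffices to exhibit for each $1\le i\le \dim(b^o)$ an integer $\beta(i)\le\gamma(i)$ with
\[
a^o_1+\cdots+a^o_{\beta(i)}+J_{a^e,w}(0,i-\beta(i))\ \ge\ b^o_1+\cdots+b^o_i .
\]
For this I take the greedy $\beta(i)$ of Lemma \ref{lemma_step}: increment whenever $\beta(i-1)<\gamma(i)$, and otherwise keep it. This gives $\beta(i)\le\gamma(i)$ automatically, because $\gamma$ is non-decreasing in $i$ (the $b^o_i$ are non-increasing). Lemma \ref{lemma_step} then supplies an $r$ with
\[
J_{a^e,w}(0,i-\beta(i))=\sum_{k=1}^r a^e_k-\sum_{k=1}^{r-i+\beta(i)} b^e_k ,
\]
together with
\[
\sum_{k\le\beta(i)}a^o_k+\sum_{k\le r}a^e_k\ \ge\ \sum_{k\le i}b^o_k+\sum_{k\le r-i+\beta(i)}b^e_k .
\]
Subtracting the $b^e$ sum from both sides gives exactly the condition of Lemma \ref{extends}. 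So $M'$ extends to a full allocation $\left[\begin{smallmatrix} M' & 0\\ M''&\end{smallmatrix}\right]$, and after permuting rows and columns back to the original ordering of $a$ and $b$ this is an allocation of $a$ onto $b$.

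\textbf{Expected obstacle.} The hardest step is the second inequality of Lemma \ref{lemma_step} in the case $\beta(i)=\gamma(i)$. There the hypothesis of the present lemma has to be applied with $j=r-i+\beta(i)$. One must also check that this choice is legal: $0\le j\le\dim(b^e)$, and $i+j$ does not exceed the number of entries that are even or at least $b^o_i$. The risk is that $r-i+\beta(i)$ could be negative or too large when $i-\beta(i)$ exceeds the number of even parts of $b$ already covered. I would handle this using the count $\dim(b^o)\le\dim(a^o)+\dim(a^e)-\dim(b^e)$ obtained in the proof of Lemma \ref{no_abort}, together with the $\omega$-integrality from Lemma \ref{lemma_glue}.

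The remaining checks are routine: the resulting matrix is doubly stochastic, it satisfies $Ma=b$, and it has zeros wherever an odd $a_i$ is placed onto an even $b_j$ or onto $b_j>a_i$. All of these follow from the invariants maintained in the proof of Lemma \ref{extends}.
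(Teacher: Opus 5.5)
Your proposal is correct and follows the paper's proof essentially line for line: run the greedy allocation (Lemma~\ref{no_abort}), reduce via Lemma~\ref{extends} to exhibiting $\beta(i)\le\gamma(i)$, take $\beta(i)$ and $r$ from Lemma~\ref{lemma_step}, and subtract the $b^e$ sum. The index-legality concern you raise for $j=r-i+\beta(i)$ lies inside the proof of Lemma~\ref{lemma_step}, which the paper also cites here without re-proving, so it does not affect this step.
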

\begin{proof}
By lemma \ref{no_abort}, we can perform the greedy allocation of $a^e$. By lemma \ref{extends} it suffices to show there exists $\beta(i) \leq \gamma(i)$ such that
\[
\left ( \sum_{k = 1}^{\beta(i)} a^o_{k} \right ) + J_{a^e, w} (0, i - \beta(i) ) \geq b^o_1 + \cdots + b^o_i
\]
for all $i$. Take $\beta(i)$ as in Lemma \ref{lemma_step}. Then there exists an integer $r$ such that
\[
J_{a^e, w} (0, i - \beta(i) ) = (a^e_1 + \cdots a^e_r) - (b^e_1 + \cdots + b^e_{r - i + \beta(i)})
\]
and
\[
\left ( \sum_{k = 1}^{\beta(i)} a^o_{k} \right ) + J_{a^e, \omega} (0, i - \beta(i) ) = \left ( \sum_{k = 1}^{\beta(i)} a^o_{\beta(i)} \right ) + \left ( \sum_{k = 1}^{r} a^e_k \right ) - \left ( \sum_{k = 1}^{r - i + \beta(i) } b^e_k \right ) \geq \sum_{k = 1}^i b^o_i
\]
\end{proof}

We are finally ready to prove Theorem \ref{main}.

\begin{proof}[Proof of the if direction in Theorem \ref{main}] By Lemma \ref{rational_allocation_implies_theorem}, it suffices to show there exists a rational allocation of $a$ onto $b$. By Lemma \ref{allocation_implies_rational_allocation}, it suffices to show there exists an arbitrary allocation of $a$ onto $b$. By Lemma \ref{other_condition} this is true.
\end{proof}

\end{section}

\begin{section}{A Construction When $a$ Does Not Majorize $b$ With Parity} \label{sec4}
In this section we prove the only if direction of Theorem \ref{main}. So let $a_1, \cdots, a_n$ be a sequence that does not majorize $b_1, \cdots, b_n$ with parity. There exist indices $i_0, j_0$ such that either $\sum_{k = 1}^{i_0} b^o_k  + \sum_{k  = 1}^{j_0} b^e_k  >  \sum_{k  = 1}^{i_0 + j_0} a'_k$ or there are less than $i_0  + j_0$ elements in $a_1, \cdots, a_n$ that are either even or greater than or equal to $b^o_{i_0}$.

\subsection{The General Case} We will start with focusing on the main case where the former holds, $i_0 > 0$, and $b_{i_0} > 1$.  The remaining cases are edge cases for which the same idea will work but we need to tweak the construction. 

\medskip

We may assume that 
\begin{equation}
 A' : =  \min \{ a'_1, \cdots, a'_{i_0 + j_0} \}  <  \min \{ b^o_{i_0}, b^e_{j_0} \} \label{assumption}
\end{equation}
as otherwise we could find a smaller pair $(i_0, j_0)$.

\begin{subsection}{The Construction of $G$}
 We will construct a graph $G$ using a parameter $M$ which, for $M$ sufficiently large, will satisfy $w_{a_1}(G) \cdots w_{a_n}(G) < w_{b_1}{G} \cdots w_{b_n} (G)$. $G$ will have two components $H$ and $K$. $H$ will be constructed out of $b^o_{i_0} + 1$ parts, $H_0, \cdots, H_{b^o_{i_0}}$ where $|H_0| = |H_{b^o_{i_0}}| = M^3$ and $|H_1| = |H_2| = \cdots = |H_{b^o_{i_0} - 1}| = M$. For $i \in [1, b^o_{i_0} - 1]$, $H_i$ and $H_{i + 1}$ will form a complete bipartite graph, each vertex in $H_1$ will be adjacent to $M^2$ unique vertices in $H_0$ and each vertex in $H_{b^o_{i_0} - 1}$ will be adjacent to $M^2$ unique vertices in $H_{b^o_{i_0}}$.  $K$ will be any $M^{1 - \vep}$ regular graph on $M^{3 + A' \vep}$ vertices where $\vep < 1/(1 + b_{i_0} - A')$ is a sufficiently small constant.  
\end{subsection}
\[
\begin{tikzpicture}
\tikzset{
  every arrow/.style={line width=1.5pt},
  >=stealth
}
\node[circle, fill = black, draw] (A1) at (0, 0){};
\node[circle, fill = black, draw] (A2) at (0, 1){};
\node[circle, fill = black, draw] (A3) at (0, 2){};
\node[circle, fill = black, draw] (A4) at (0, 3){};
\node[circle, fill = black, draw] (A5) at (0, 4){};
\node[circle, fill = black, draw] (A6) at (0, 5){};
\node[circle, fill = black, draw] (A7) at (0, 6){};
\node[circle, fill = black, draw] (A8) at (0, 7){};

\node[circle, fill = black, draw] (B1) at (3, 2){};
\node[circle, fill = black, draw] (B2) at (3, 5){};

\node[circle, fill = black, draw] (C1) at (6, 2){};
\node[circle, fill = black, draw] (C2) at (6, 5){};

\node[circle, fill = black, draw] (D1) at (9, 0){};
\node[circle, fill = black, draw] (D2) at (9, 1){};
\node[circle, fill = black, draw] (D3) at (9, 2){};
\node[circle, fill = black, draw] (D4) at (9, 3){};
\node[circle, fill = black, draw] (D5) at (9, 4){};
\node[circle, fill = black, draw] (D6) at (9, 5){};
\node[circle, fill = black, draw] (D7) at (9, 6){};
\node[circle, fill = black, draw] (D8) at (9, 7){};

\node[circle] (H0) at (0, 8){$H_0$};
\node[circle] (H1) at (3, 8){$H_1$};
\node[circle] (H2) at (6, 8){$H_2$};
\node[circle] (H3) at (9, 8){$H_3$};

\path (A1) edge (B1);
\path (A2) edge (B1);
\path (A3) edge (B1);
\path (A4) edge (B1);

\path (A5) edge (B2);
\path (A6) edge (B2);
\path (A7) edge (B2);
\path (A8) edge (B2);

\path (B1) edge (C1);
\path (B1) edge (C2);
\path (B2) edge (C1);
\path (B2) edge (C2);

\path (D1) edge (C1);
\path (D2) edge (C1);
\path (D3) edge (C1);
\path (D4) edge (C1);

\path (D5) edge (C2);
\path (D6) edge (C2);
\path (D7) edge (C2);
\path (D8) edge (C2);

\node[align = flush center, text width = 8cm] (Label) at (4.5, -2)
{
Picture of $H$ with $M = 2$ and $b^o_{i_0} = 3$.
};

\end{tikzpicture}
\]

\begin{lemma} \label{Hbound} Let $k$ be a non-negative integer and let 
\[
f(k) = \begin{cases}     M^{2 + k} &  k < b^o_{i_0}, k \equiv 1 \pmod 2 \\ M^{3 + k} & \text{else} \end{cases}
\]
then
\[
f(k)   \leq w_{k}(H)   \leq w_k(P_{b^o_{i_0} })  f(k)
\]

\end{lemma}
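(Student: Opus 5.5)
The plan is to sort walks in $H$ by the layers they visit, and count exactly how many walks in $H$ follow a given layer sequence. Write $b=b^o_{i_0}$; in this case $b$ is odd and $b>1$, so $b\ge 3$ and $H$ has at least two middle layers. Every walk $(v_0,\dots,v_k)$ in $H$ projects to a walk $(x_0,\dots,x_k)$ in $P_b$, where $v_t\in H_{x_t}$.

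First I record the degrees, which are exact. A vertex of $H_0$ has exactly one neighbour, which lies in $H_1$. A vertex of $H_1$ has $M^2$ neighbours in $H_0$ and $M$ neighbours in $H_2$. A vertex of $H_i$ with $1<i<b-1$ has $M$ neighbours in each adjacent layer. The right end is symmetric: a vertex of $H_{b-1}$ has $M$ neighbours in $H_{b-2}$ and $M^2$ in $H_b$, and a vertex of $H_b$ has one neighbour. Because these numbers depend only on the layers involved, the number of lifts of a fixed projected walk $x$ is the product
\[
|H_{x_0}|\prod_{t=1}^k d(x_{t-1}\to x_t),
\]
where $d(x\to y)$ is the number of neighbours in $H_y$ of a vertex of $H_x$. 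Each factor is a power of $M$, so this count equals $M^{E(x)}$ for some exponent $E(x)$.

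To compute $E(x)$, define a potential $\phi(0)=\phi(b)=3$ and $\phi(i)=2$ for $0<i<b$. One checks the identity $\log_M d(x\to y)=1+\phi(y)-\phi(x)$ case by case:
\begin{itemize}
\item $0\to 1$ gives exponent $0$, and $1\to 0$ gives $2$;
\item a step between two middle layers gives $1$;
\item $(b-1)\to b$ gives $2$, and $b\to b-1$ gives $0$.
\end{itemize}
The sum telescopes to $E(x)=\log_M|H_{x_0}|+k+\phi(x_k)-\phi(x_0)$. Since $\log_M|H_{x_0}|=\phi(x_0)$ when $x_0\in\{0,b\}$ and equals $\phi(x_0)-1$ otherwise, we get $E(x)=k+\phi(x_k)-[x_0\notin\{0,b\}]$. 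Hence $E(x)\le k+3$, with equality only if both $x_0$ and $x_k$ lie in $\{0,b\}$.

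Now suppose $k$ is odd and $k<b$. A walk in $P_b$ with both endpoints in $\{0,b\}$ has even length if it starts and ends at the same end, and length at least $b$ otherwise. So in this case $E(x)\le k+2$. In all cases $E(x)\le\log_M f(k)$. Summing over the at most $w_k(P_b)$ projected walks gives the upper bound $w_k(H)\le w_k(P_b)f(k)$.

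For the lower bound it suffices to exhibit one projected walk $x$ with $E(x)=\log_M f(k)$.
\begin{itemize}
\item If $k$ is even, take the walk $0,1,0,1,\dots,0$, which starts and ends at $0$, so $E=k+3$. This includes $k=0$, where $w_0(H)\ge |H_0|=M^3$.
\item If $k$ is odd and $k\ge b$, go from $0$ to $b$ and then bounce between $b$ and $b-1$ for the remaining $k-b$ steps. This is an even number of steps, so the walk ends at $b$ and $E=k+3$.
\item If $k$ is odd and $k<b$, take the walk $0,1,0,\dots,1$, which ends at the middle vertex $1$, so $E=k+2$.
\end{itemize}

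I expect the main obstacle to be bookkeeping rather than any real difficulty. The points that need care are the exact degree counts at the boundary layers and the parity argument that rules out exponent $k+3$ when $k$ is odd and $k<b$. The potential-function telescoping is the device that keeps both of these clean.
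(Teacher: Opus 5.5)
Your proof is correct and follows essentially the same route as the paper. Both arguments project walks in $H$ to walks in $P_{b^o_{i_0}}$ and count that each projected walk has exactly $M^{k+1+[x_0\in\{0,b\}]+[x_k\in\{0,b\}]}$ lifts (your telescoping potential yields precisely this exponent). Both then use the parity/distance fact that both endpoints can lie in $\{0,b\}$ iff $k$ is even or $k\ge b$; your version is more careful than the paper's in checking the boundary degrees and exhibiting explicit witness walks for the lower bound.
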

\begin{proof}
A walk of length $k$ in $H$ naturally projects down to a walk in $P_{b^o_{i_0}} = (0, \cdots b)$, by $\pi(p)_i$ is the index $j$ for which $p_i \in H_j$. Let us compute $|\pi^{-1}(p)|$. If $p_0 \in \{0, b^o_{i_0} \}$, then there are $M^3$ choices for $\pi^{-1}(p_0)$ and otherwise there are $M$ choices. Then for $i \geq 1$, there are $M^2$ choices for $\pi^{-1}(p_i)$ if $i \in \{0, b^o_{i_0}\}$, there is $1$ choice for $\pi^{-1}(p)$ if $p_{i - 1} \in \{0, b^o_{i_0}\}$, and there are $M$ choices otherwise. Putting this together we obtain, 
\[
|\pi^{-1} (p) | = M^{ k + 1 + \chi_{p_0  \in \{0, b^o_{i_0}\} } + \chi_{p_k \in \{0, b^o_{i_0}\} } }
\]  
Then the lemma follows from the fact that there exists a path $p$ of length $k$ such that $p_{0} \in \{0, b^o_{i_0} \}$ and $p_{k} \in \{ 0, b^o_{i_0} \}$ iff $k$ is even or $k$ is odd and greather than or equal to $b^o_{i_0}$.
\end{proof}

\begin{lemma} \label{weight_bound} Let $k$ be a non-negative integer. Then,
\[
w_k(G) = \begin{cases} \Theta(M^{3 + k} )  & k \equiv 0 \pmod 2,  k \geq A' \text{ OR }  k \equiv 1 \pmod 2,  k \geq b^o_{i_0} \\   
\Theta(M^{3 + A' \vep + (1 - \vep) k } )  &   \text{ else } 
\end{cases}
\]
\end{lemma}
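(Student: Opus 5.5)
Since $G$ is the disjoint union of $H$ and $K$, we have $w_k(G) = w_k(H) + w_k(K)$, and I would estimate the two terms separately and compare exponents of $M$. For $H$, Lemma \ref{Hbound} gives $w_k(H) = \Theta(f(k))$, because $w_k(P_{b^o_{i_0}})$ depends only on $k$ and not on $M$. Thus $w_k(H) = \Theta(M^{3+k})$ when $k$ is even or $k \geq b^o_{i_0}$. In the remaining case, $k$ odd with $k < b^o_{i_0}$, we get $w_k(H) = \Theta(M^{2+k})$. For $K$, which is $M^{1-\vep}$-regular on $M^{3+A'\vep}$ vertices, the count is exact: $w_k(K) = M^{3+A'\vep}\cdot M^{(1-\vep)k} = M^{3+A'\vep+(1-\vep)k}$.

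The main step is to compare the exponent $3+A'\vep+(1-\vep)k$ of $K$ with the exponent of $H$ in each case.

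\emph{Case 1: $k$ odd and $k \geq b^o_{i_0}$.} Then $w_k(H) = \Theta(M^{3+k})$. By \eqref{assumption}, $k \geq b^o_{i_0} > A'$, so the $K$-exponent is $3 + k - \vep(k - A') < 3+k$. Hence $H$ dominates and $w_k(G) = \Theta(M^{3+k})$.

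\emph{Case 2: $k$ even.} Again $w_k(H) = \Theta(M^{3+k})$. If $k \geq A'$, then $\vep(k-A') \geq 0$, so $H$ dominates or the two terms are of the same order; either way $w_k(G) = \Theta(M^{3+k})$. If $k < A'$, then $\vep(A'-k) > 0$, so $K$ strictly dominates and $w_k(G) = \Theta(M^{3+A'\vep+(1-\vep)k})$.

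\emph{Case 3: $k$ odd and $k < b^o_{i_0}$.} Now $w_k(H) = \Theta(M^{2+k})$. The $K$-exponent exceeds $2+k$ exactly when $1 + \vep(A' - k) > 0$. This holds automatically when $k \leq A'$. When $A' < k < b^o_{i_0}$, it requires $\vep(k - A') < 1$. Since $k - A' < b_{i_0} - A' < 1 + b_{i_0} - A'$, this is guaranteed by the choice $\vep < 1/(1+b_{i_0}-A')$. So $K$ dominates and $w_k(G) = \Theta(M^{3+A'\vep+(1-\vep)k})$.

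Collecting the cases gives exactly the dichotomy in the statement. Case 3 is the only delicate point: it is the one place where the constraint on $\vep$ is needed, and the rest is bookkeeping of exponents. I would also check that the regular graph $K$ exists for large $M$, after adjusting $M$ if needed so that the degree $M^{1-\vep}$ and the vertex count $M^{3+A'\vep}$ are integers with even product, or after rounding, which changes the counts only by constant factors. Finally, the implied constants in all the $\Theta$ bounds depend only on $k$ and $b^o_{i_0}$, not on $M$.
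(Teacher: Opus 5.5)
Your proof is correct and takes the same route as the paper: you write $w_k(G)=\Theta(\max\{w_k(H),w_k(K)\})$, use $w_k(H)=\Theta(f(k))$ from Lemma \ref{Hbound} and the exact count $w_k(K)=M^{3+A'\vep+(1-\vep)k}$, and compare exponents. Your case analysis spells out two points the paper leaves implicit: \eqref{assumption} (giving $A'<b^o_{i_0}$) is what makes $H$ dominate for odd $k\ge b^o_{i_0}$, and the bound on $\vep$ is what makes $K$ dominate for odd $A'<k<b^o_{i_0}$.
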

\begin{proof} Using the $f(k)$ in Lemma \ref{Hbound},
\begin{align*}
w_k(G) & = \Theta ( \max \{ w_k(H), w_k(K) \} )\\
 & = \Theta ( \max \{  f(k),  M^{3 + A' \vep + (1 - \vep) k } \} )\\
& =  \begin{cases} \Theta(M^{3 + k} )  & k \equiv 0 \pmod 2,  k \geq A' \text{ OR }  k \equiv 1 \pmod 2,  k \geq b^o_{i_0} \\   
\Theta(M^{3 + A' \vep + (1 - \vep) k } )  &   \text{ else } 
\end{cases}
\end{align*}

\end{proof}
We can now prove the only direction in Theorem \ref{main} in this general case.
\begin{proof} [Proof of the only direction of Theorem \ref{main} in the general case ] Let $S := \sum_{k = 1}^n a_k = \sum_{k = 1}^n b_k$. Let $\hat{a}_1, \cdots, \hat{a}_{n - i_0 - j_0}$ be the elements in $a$ not appearing in $a'_1, \cdots, a'_{i_0 + j_0}$. By assumption \ref{assumption} each element in $\{\hat{a}_1, \cdots, \hat{a}_{n - i_0 - j_0} \}$ is either even and less than or equal to $A'$ or odd and less than $b_{i_0}$. By Lemma \ref{weight_bound},
\begin{align*}
w_{b_1}(G) \cdots w_{b_n}(G) & \geq \Theta \left ( \left ( \prod_{k = 1}^{i_0} M^{3 + b^o_k}  \right )  \left ( \prod_{k = 1}^{j_0} M^{3 + b^e_k} \right ) \left ( \prod_{k = i_0 + 1}^{\dim(b^o)}  M^{3 + A' \vep + (1 - \vep) b^o_k } \right ) \left ( \prod_{k = j_0 + 1}^{\dim(b^e) }M^{3 + A' \vep + (1 - \vep) b^e_k}  \right ) \right )\\
&=  \Theta \left ( M^{3n + (1 - \vep) S  +  (n - i_0 - j_0) A' \vep +  \left ( \sum_{k = 1}^{i_0}  b^o_k  + \sum_{k = 1}^{j_0} b^e_k \right ) \vep } \right ) \\
& \gg  \Theta \left ( M^{3n + (1 - \vep) S + (n - i_0 - j_0) A' \vep +  \left ( \sum_{k = 1}^{i_0 + j_0} a'_k \right ) \vep  } \right )\\
& = \Theta \left (  \left ( \prod_{k = 1}^{i_0 + j_0}  M^{3 + a'_{k} } \right ) \left ( \prod_{k = 1}^{n - i_0 - j_0} M^{3 + A' \vep + (1 - \vep) \hat{a}_k } \right ) \right ) \\
& = w_{a_1}(G) \cdots w_{a_n}(G)
\end{align*}
\end{proof}
In the above $\Theta$ is with respect to $M$.
\subsection{The Edge Cases} We will now describe how to change the construction of $G$ in the following three remaining cases
\begin{enumerate}
\item $i_0 = 0$
\item $b_{i_0} = 1$
\item  There are less than $i_0  + j_0$ elements in $a_1, \cdots, a_n$ that are either even or greater than or equal to $b^o_{i_0}$. 
\end{enumerate}
If 1 holds, instead of having $b^o_{i_0}$ parts, then we take $H$ to have at least $\max \{ b^o_1 , a^o_1 \} + 3$ parts (or an arbitrary number of parts if $a$ and $b$ have no odd components). If 2 holds, we replace $H$ with an $M$ regular graph with $M^3$ vertices. If 3 holds then we let $G = H$ instead of $G = H \sqcup K$. Note that 1 and 3 may hold simultaneously in which case we both take $H$ to have at least $\max \{b^o_1, a^o_1 \} + 3$ components and let $G = H$. In all these cases, the same computation that we did in the general case will apply.
\end{section}


\begin{thebibliography}{99}
\bibitem{BR} Blekherman, Raymond, Proof of the Erd\H{o}s-Simonovits conjecture on walks, Graphs and Combinatorics 39 (3) 53
\bibitem{BlR} George Blakley and Prabir Roy. H\"older type inequality for symmetric matrices with nonnegative entries. Proceedings of the American Mathematical Society, 16:1244-1245, 1965.
\bibitem{ES} P. Erd\H{o}s and M. Simonovits. Compactness results in extremal graph theory. Combinatorica, 2(3):275-283, 1982.
\bibitem{HK} R. Hemmecke, S. Kosub, E. mayr, H. T\"aubig, J. Weihmann. Inequalities for the Number of Walks in graphs. Algorithmica, 66, 804-828, (2013).
\bibitem{DG} A. Dress and I. Gutman. The number of walks in a graph. Applied Mathematics Letters, 16(5): 797-801, July 2003.
\bibitem{LM} J. Lagarias, J. Mazor, L. Shepp, B. McKay. An inequality for walks in a graph. IAM Review, 25(3):403, July 1983.
\bibitem{BR2} Blekherman, Grigoriy, and Annie Raymond. "A Path Forward: Tropicalization in Extremal Combinatorics." Advances in Mathematics, vol. 407, 2022, article 108561.
\end{thebibliography}
\end{document}